\newtheorem{de}{Definition}[section]
\newtheorem{prop}[de]{Proposition}
\newtheorem{cor}[de]{Corollary}
\newtheorem{lem}[de]{Lemma}
\begin{document}

\title{Complex supermanifolds with many unipotent automorphisms}
\author{
{\bf Matthias Kalus}\\
{\small  Fakult\"at f\"ur Mathematik}\\{\small Ruhr-Universit\"at Bochum}\\ {\small D-44780 Bochum, Germany}\\
}
\date{}
\maketitle
\centerline{ {\bf Keywords: } complex supermanifold; nilpotent vector field; unipotent automorphism  }
\centerline{  {\bf MSC2010:} 58A50, 58H15, 54H15 }
\renewcommand{\thefootnote}{\arabic{footnote}}

\begin{abstract}\noindent
An automorphism on a complex supermanifold $\mathcal M$ is called unipotent if it reduces to the identity on the associated graded supermanifold $gr(\mathcal M)$.  These automorphisms are close to be complementary to those responsible for homogeneity of a supermanifold. In analogy, their study yields results on the classification of supermanifolds. Unipotent automorphisms are induced by even global degree increasing vector fields $X\in \mathcal V_{\mathcal M,\bar 0}^{(2)}$. Plenitude of unipotent automorphisms is understood as follows: the presheaf of common kernels of the operators  $[X,\cdot]$ for $X\in \mathcal V_{\mathcal M,\bar 0}^{(2)}$, on  superderivations vanishes up to errors of a fixed degree $t$ and higher. The isomorphy class of such strictly $t$-nildominated supermanifolds is determined up to errors of degree $t$ and higher by $\mathcal V_{\mathcal M,\bar 0}^{(2)}$ and $gr(\mathcal M)$. An example shows that a strictly $t$-nildominated supermanifold can be non-split, deformed  already in degrees lower 
than $t$. 
\end{abstract}

\bigskip \noindent
A complex supermanifold $\mathcal M$ is called homogeneous if it admits a transitive action of a Lie supergroup $\mathcal G$. Here transitivity is defined by two conditions: first  the underlying action $G \times M\to M$ is transitive, in particular the even vector fields cover all (even) directions of the underlying manifold. Secondly the induced odd vector fields on $\mathcal M$ cover all odd directions, i.e. all fiber directions of the associated vector bundle $E\to M$. Complex homogeneous supermanifolds were studied in \cite{On96} and considering special classes of underlying manifolds e.g. in \cite{On07}. A classification statement is given in \cite{Vi11}: a complex homogeneous supermanifold can be biholomorphically identified with a quotient $\mathcal G/\mathcal H$ of complex Lie supergroups. Further a condition for the existence of splittings can be found in \cite{Vi15}.

\bigskip \noindent
In the present paper we consider the global even vector fields on a complex supermanifold $\mathcal M$ that reduce on the associated graded supermanifold $gr(\mathcal M)=(M,\mathcal O_{\Lambda E})$ to zero. These fields are in some sense complementary to those studied on homogeneous supermanifolds. Their Lie algebra $\mathcal V_{\mathcal M,\bar 0}^{(2)}$ is bijectively identified via the exponential map with the group $H^0(M,G_E)$ of unipotent automorphisms of $\mathcal M$, i.e. automorphisms reducing to the identity on $gr(\mathcal M)$. $H^0(M,G_E)$ is a normal divisor in the group of automorphisms of $\mathcal M$, and in the case of a compact underlying manifold, it is a complex Lie subgroup of the automorphism Lie supergroup of $\mathcal M$ (see \cite{BK}). The study of supermanifolds with many unipotent automorphisms yields the counterpart of the classification results on supermanifolds with no unipotent automorphisms in \cite{Kal}. In detail, we give a condition under which the equivalence class of a supermanifold is encoded in the nilpotent Lie group $H^0(M,G_E)$ and the complex vector bundle $E\to M$.

\bigskip\noindent
We give suitable notions for the plenitude of vector fields in $\mathcal V_{\mathcal M,\bar 0}^{(2)}$ and deduce results on the classification and the splitting problem of complex supermanifolds in the non-homogeneous case. Elements of $\mathcal V_{\mathcal M,\bar 0}^{(2)}$ can hardly be considered as tangent vectors at points of $M$. A first promising replacement for transitivity is the following definition of plenitude: denote the sheaf of even $\mathbb Z$-degree increasing derivations on the sheaf of superfunctions $\mathcal O_\mathcal M$ by $Der_{\bar 0}^{(2)}(\mathcal O_\mathcal M)$. A complex supermanifold $\mathcal M$ is called $2s$-nildominated  if the presheaf of common kernels of the $[X,\cdot]\subset Der_{\bar 0}^{(2)}(Der_{\bar 0}^{(2)}(\mathcal O_\mathcal M))$ for $X \in \mathcal V_{\mathcal M,\bar 0}^{(2)}$, is contained in degree $2s$ and higher in the $\mathbb Z$-filtration. We will prove the following result in section \ref{sec2}: fixing $E\to M$, the \v{C}ech  equivalence class $\alpha\in 
H^1(M,G_E)$ (see \cite{Gr}) of a $2s$-nildominated complex supermanifold $\mathcal M$ is 
fixed up to errors of degree $2s$ and higher by the explicit fields $\mathcal V_{\mathcal M,\bar 0}^{(2)}$ regarded as a subset of $C^0(M,Der_{\bar 0}^{(2)}(\mathcal O_{\Lambda E}))$.  

\bigskip\noindent
In section \ref{sec3} we discuss a graded version of nildominance naturally arising from the definition of nildominance. We prove that graded $2s$-nildominated supermanifolds are already split up to errors of degree $2s$ and higher.  So one might suspect that also $2s$-nildominated supermanifolds are split up to errors of degree $2s$ and higher. A disappointing expectation. In section \ref{sec4} we give an example of a $4$-nildominated supermanifold that is non-split being non-trivially deformed already in degree $2$. The example is constructed on $\mathbb P^1(\mathbb C)$ with odd dimension $7$. Although it might be suspected that many examples exist, it is technically difficult to construct one. Reasons are explained in the respective section. 

\bigskip\noindent
Unfortunately the notion of $2s$-nildominance is not stable with respect to products of supermanifolds, a feature that holds for homogeneity of supermanifolds. Section \ref{sec5} presents the final and stricter notion of plenitude of unipotent automorphisms. Here the presheaf of common kernels of the  $[X,\cdot]\subset Der_{\bar 0}^{(2)}(Der(\mathcal O_\mathcal M))$  for  $X \in \mathcal V_{\mathcal M,\bar 0}^{(2)}$, is asked to lie in degree $t$ and higher. A complex supermanifold satisfying this will be called strictly $t$-nildominated. We prove that strict $t$-nildominance of the factors induces strict $t$-nildominance of the product of complex supermanifolds. Finally strict $t$-nildominance still allows non-split examples: the example of section \ref{sec4} is shown to be strictly $3$-nildominated.

\section{Notation}

First we fix notation. We consider supermanifolds in the sense of Berezin, Kostant, and Leites. For details see e.g. \cite{Kost}, \cite{Lei}, \cite{D-M}. Let $\mathcal M$ be a complex supermanifold with sheaf of superfunctions $\mathcal O_\mathcal M$ containing the subsheaf $\mathcal N_\mathcal M$ of nilpotent elements. Denote the induced $\mathbb Z$-filtration by upper indexes in brackets, so $\mathcal O_\mathcal M^{(0)}=\mathcal O_\mathcal M$, $\mathcal O_\mathcal M^{(1)}=\mathcal N_\mathcal M$ etc. and note that the sheaves of superderivations $Der(\mathcal O_\mathcal M)$ and of endomorphisms $End(\mathcal O_\mathcal M)$ inherit the $\mathbb Z$-filtration. Denote the underlying manifold of $\mathcal M$ by $M$ and the associated complex vector bundle defined via $\mathcal O_\mathcal M^{(1)}/\mathcal O_\mathcal M^{(2)}$ by $E\to M$. The $\mathbb Z$-graded supermanifold associated to $\mathcal M$ is 
$ gr(\mathcal M)=(M,\mathcal O_{\Lambda E})$ with $O_{\Lambda E}$ denoting the sheaf of sections in $\Lambda E$. The $\mathbb 
Z$-grading will be denoted by lower indexes. Let $\alpha\in H^1(M,G_E)$ be an associated non-split cohomology class of $\mathcal M$ (see \cite{Gr}). Here $G_E$ is the sheaf of those (even) automorphisms of the sheaf of $\mathbb Z/2\mathbb Z$-graded algebras $\mathcal O_{\Lambda E}$ that have the identity as degree zero part. Further $H^1(M,G_E)$ denotes the first Check cohomology induced by concatenation of morphisms. Now $\alpha$ is well-defined up to the action of $H^0(M,Aut(E))$, the global automorphisms of $E$. Following \cite{Rot}, it is $\alpha=[\exp(u)]$ represented by a cochain $u$ in $C^1(M,Der^{(2)}_{\bar 0}(\mathcal O_{\Lambda E}))$. The lower index with bar denotes the $\mathbb Z/2\mathbb Z$-grading. If $u$ can be chosen to lie in $C^1(M,Der^{(2s)}_{\bar 0}(\mathcal O_{\Lambda E}))$ then $\mathcal M$ will be called \textit{split up to errors of degree} $2s$ \textit{and higher}. 

\bigskip
Let $\{U_i\}$ be a Leray covering of coordinate charts of $M$ with respect to coherent sheaf cohomology such that the $U_{ij}:=U_i\cap U_j$ are connected. Then superfunctions $f\in \mathcal O_\mathcal M(U_i\cup U_j)$ with $f_l=f|_{U_l} \in \mathcal O_{\Lambda E}|_{U_l}$, $l=i,j$, transform with $f_i =\alpha_{ij}(f_j)$. So for derivations $X=Der(\mathcal O_\mathcal M)(U_i\cup U_j)$ with $X_l=X|_{U_l} \in Der(\mathcal O_{\Lambda E}|_{U_l})$, $l=i,j$, we have: 
\begin{align}\label{eq:001}
 X_i=\alpha_{ij} X_j \alpha_{ij}^{-1}
\end{align}
Let $\mathcal V_{\mathcal M}=H^0(M, Der(\mathcal O_\mathcal M))$  be the Lie superalgebras of global super vector fields on $\mathcal M$ and $\mathcal V_{\Lambda E}=H^0(M, Der(\mathcal O_{\Lambda E}))$. Then $\mathcal V_{\mathcal M,\bar 0}^{(2)}$ is the Lie algebra of the group of unipotent automorphisms of $\mathcal M$. By the above considerations we can regard $\mathcal V_{\mathcal M,\bar 0}^{(2)}$ as a subset of $C^0(M,Der_{\bar 0}^{(2)}(\mathcal O_{\Lambda E}))$. Now (\ref{eq:001}) for $X$ running through $\mathcal V_{\mathcal M,\bar 0}^{(2)}$ yields conditions on $u=\log(\alpha)$. If these conditions  determine $u$ up to terms in $C^1(M,Der^{(2s)}_{\bar 0}(\mathcal O_{\Lambda E}))$ then $\mathcal M$ will be called \textit{determined up to errors of degree $2s$ and higher by its unipotent automorphisms}.

\bigskip
For an $X \in \mathcal V_{\mathcal M,\bar 0}^{(2)}$ set $k(X)$ to be the maximal positive integer with $X \in \mathcal V_{\mathcal M,\bar 0}^{(k(X))}$.  We obtain a linear map  $X_{\bullet}: \mathcal O_{\mathcal M }^{(t)}/\mathcal O_{\mathcal M }^{(t+1)}\to \mathcal O_{\mathcal M }^{(t+k(X))} / \mathcal O_{\mathcal M }^{(t+k(X)+1)}$ for any $t\geq 0$, and linearly continue it to the sheaf $\mathcal O_{\Lambda E}=\oplus_{t\geq 0} \mathcal O_{\mathcal M }^{(t)}/\mathcal O_{\mathcal M }^{(t+1)}$. The resulting object is a derivation  $X_{\bullet} \in H^0(M, Der^{k(X)}_{\bar 0}(\mathcal O_{\Lambda E}))$. Note that $\mathcal V_{\mathcal M,\bar 0}^{(2)} \to H^0(M, Der^{(2)}_{\bar 0}(\mathcal O_{\Lambda E}))$, $X\mapsto X_\bullet$ is in general not linear.

\section{Nildominated supermanifolds}\label{sec2}

We give sense to the expression of plenitude of unipotent automorphisms. 

\begin{de}
 A complex supermanifold $\mathcal M$ is called $2s$-\textit{nildominated}, $s \in \mathbb N$, if for any open set $U\subset M$ we have:
\begin{align}\label{eq:003}
 \bigcap_{X\in \mathcal V_{\mathcal M,\bar 0}^{(2)}} 
 Ker\Big([X,\cdot]:Der^{(2)}_{\bar 0}(\mathcal O_\mathcal M) (U)
 \to Der^{(2)}_{\bar 0}(\mathcal O_\mathcal M) (U)\Big)\subset Der^{(2s)}_{\bar 0}(\mathcal O_\mathcal M) (U)
\end{align}
\end{de}

In particular $2s$-nildominance forces the non-trivial center of $\mathcal V_{\mathcal M,\bar 0}^{(2)}$  to lie in $\mathcal V_{\mathcal M,\bar 0}^{(2s)}$.  If the odd dimension of $\mathcal M$ is $2s$ or $2s+1$ then $\mathcal M$ can be at most $2s$-nildominated. In this case $\mathcal M$ is simply called \textit{nildominated}. We can follow:

\begin{prop}\label{007}
 A $2s$-nildominated supermanifold $\mathcal M$ is determined up to errors of degree $2s$ and higher by its unipotent automorphisms. In particular a nildominated supermanifold $\mathcal M$ with $H^1(M, Der^{(rk(E)-1)}_{\bar 0}(\mathcal O_{\Lambda E}))=0$ is determined by its unipotent automorphisms.
\end{prop}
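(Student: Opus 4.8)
\bigskip\noindent
\emph{Sketch of the intended argument.} The first step is to put condition (\ref{eq:001}) into Lie-algebraic form. Writing $\alpha_{ij}=\exp(u_{ij})$ and using that conjugation of a derivation by the algebra automorphism $\exp(u_{ij})$ of $\mathcal O_{\Lambda E}|_{U_{ij}}$ equals the operator $\exp(\mathrm{ad}_{u_{ij}})$, where $\mathrm{ad}_{v}:=[v,\,\cdot\,]$, condition (\ref{eq:001}) becomes
\begin{align}\label{eq:planA}
 X_i=\exp(\mathrm{ad}_{u_{ij}})(X_j)\quad\text{on }U_{ij},\ \text{for all }X\in\mathcal V_{\mathcal M,\bar 0}^{(2)},
\end{align}
with $X_j$ and $u_{ij}$ lying in the pro-nilpotent filtered Lie algebra $Der^{(2)}_{\bar 0}(\mathcal O_{\Lambda E})(U_{ij})$, on which each $\mathrm{ad}_{u_{ij}}$ raises the $\mathbb Z$-degree by at least $2$. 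To prove the first assertion I would take a second cochain $u'\in C^1(M,Der^{(2)}_{\bar 0}(\mathcal O_{\Lambda E}))$ solving (\ref{eq:planA}) for the same explicit fields $X$, fix $i,j$, and set $\beta:=\exp(-u'_{ij})\exp(u_{ij})\in G_E(U_{ij})$, $q_{ij}:=\log\beta\in Der^{(2)}_{\bar 0}(\mathcal O_{\Lambda E})(U_{ij})$. By Baker--Campbell--Hausdorff, $q_{ij}=u_{ij}-u'_{ij}+R_{ij}$ with $R_{ij}$ a sum of iterated brackets, each of length $\geq 2$ and involving at least one $u_{ij}$ and one $u'_{ij}$; in particular $R_{ij}\in Der^{(4)}_{\bar 0}$, and $R_{ij}$ vanishes when $u'_{ij}=u_{ij}$. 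Applying (\ref{eq:planA}) to $u$ and to $u'$ and eliminating $X_i$ yields $\exp(\mathrm{ad}_{q_{ij}})(X_j)=X_j$ for every $X$. Since $\mathrm{ad}_{q_{ij}}$ is degree-raising, $\exp(\mathrm{ad}_{q_{ij}})-\mathrm{id}=\psi(\mathrm{ad}_{q_{ij}})\circ\mathrm{ad}_{q_{ij}}$ with $\psi(z)=(e^{z}-1)/z$ acting as an invertible operator; hence $\exp(\mathrm{ad}_{q_{ij}})(X_j)=X_j$ is \emph{equivalent} to the linear condition $[q_{ij},X_j]=0$.

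Now $2s$-nildominance enters. Restricting the chart isomorphism $\mathcal O_{\mathcal M}|_{U_j}\cong\mathcal O_{\Lambda E}|_{U_j}$ to $U_{ij}\subset U_j$ identifies $Der^{(2)}_{\bar 0}(\mathcal O_{\mathcal M})(U_{ij})$ with $Der^{(2)}_{\bar 0}(\mathcal O_{\Lambda E})(U_{ij})$ as filtered Lie algebras and, by the definition of $X_j$, carries $X|_{U_{ij}}$ to $X_j|_{U_{ij}}$. Therefore the preimage $\hat q$ of $q_{ij}$ lies in
\[
 \bigcap_{X\in\mathcal V_{\mathcal M,\bar 0}^{(2)}}Ker\big([X,\,\cdot\,]:Der^{(2)}_{\bar 0}(\mathcal O_{\mathcal M})(U_{ij})\to Der^{(2)}_{\bar 0}(\mathcal O_{\mathcal M})(U_{ij})\big),
\]
and (\ref{eq:003}) applied to the open set $U_{ij}$ forces $\hat q\in Der^{(2s)}_{\bar 0}(\mathcal O_{\mathcal M})(U_{ij})$, i.e. $q_{ij}\in Der^{(2s)}_{\bar 0}(\mathcal O_{\Lambda E})(U_{ij})$. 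To transfer this to $u_{ij}-u'_{ij}=q_{ij}-R_{ij}$ I would induct on the even degree $d$ with $2\leq d\leq 2s-2$: assuming the degree-$c$ homogeneous parts of $u_{ij}$ and $u'_{ij}$ agree for all $c<d$, the degree-$d$ part of $R_{ij}$ involves only components of $u_{ij},u'_{ij}$ in degrees $\leq d-2$ (an iterated bracket of length $\geq 2$ whose summand degrees add to $d$ has each entry in degree $\leq d-2$), hence is unchanged if $u'_{ij}$ is replaced by $u_{ij}$ in those degrees, hence equals the degree-$d$ part of $R_{ij}$ at $u'_{ij}=u_{ij}$, which is $0$; since also the degree-$d$ part of $q_{ij}$ is $0$ for $d<2s$, the degree-$d$ part of $u_{ij}-u'_{ij}$ vanishes. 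Iterating gives $u_{ij}-u'_{ij}\in Der^{(2s)}_{\bar 0}(\mathcal O_{\Lambda E})(U_{ij})$ for all $i,j$, i.e. $u-u'\in C^1(M,Der^{(2s)}_{\bar 0}(\mathcal O_{\Lambda E}))$, which is the first assertion.

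For the supplement I would argue that when $\mathcal M$ is nildominated one has $rk(E)\in\{2s,2s+1\}$, and the parity constraint on even derivations of the $\mathbb Z$-graded algebra $\mathcal O_{\Lambda E}$ makes $Der^{(rk(E)-1)}_{\bar 0}(\mathcal O_{\Lambda E})$ coincide with the single top graded piece $Der^{(2s)}_{\bar 0}(\mathcal O_{\Lambda E})$; this sheaf is abelian, and since $G_E^{(2s+2)}=\{\mathrm{id}\}$ it is central in $G_E$. Given $H^1(M,Der^{(2s)}_{\bar 0}(\mathcal O_{\Lambda E}))=0$, the first part shows $\exp(u)$ and $\exp(u')$ agree modulo $G_E^{(2s)}$, so the classes $\alpha,\alpha'$ have the same image in $H^1(M,G_E/G_E^{(2s)})$; the central extension $1\to G_E^{(2s)}\to G_E\to G_E/G_E^{(2s)}\to1$ then forces them to differ by a class in $H^1(M,G_E^{(2s)})=0$, whence $\alpha=\alpha'$.

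The step I expect to be the main obstacle is the one carried out at the end of the first paragraph together with the beginning of the second: $2s$-nildominance is formulated through the operators $[X,\,\cdot\,]$ on superderivations of $\mathcal O_{\mathcal M}$, whereas the cochain $u$ to be pinned down lives on $gr(\mathcal M)$. Bridging the two requires both the operator identity that turns ``$\exp(\mathrm{ad}_{q_{ij}})$ fixes every $X_j$'' into the linear equation $[q_{ij},X_j]=0$, and a careful chart-wise identification over $U_{ij}$ under which the global fields $X\in\mathcal V_{\mathcal M,\bar 0}^{(2)}$ become their local representatives $X_j$. After that, the degree-by-degree bookkeeping with the Baker--Campbell--Hausdorff series is routine.
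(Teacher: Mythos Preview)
Your proof is correct and follows essentially the same route as the paper: both form the quotient $\beta$ of the two candidate cocycles, deduce that $\log\beta$ commutes with every $X$ (you via the invertibility of $\psi(\mathrm{ad}_q)$ on a filtered Lie algebra, the paper via a direct degree-by-degree expansion of $[X_i,\exp(Y_{ij})]$), and then invoke $2s$-nildominance on $U_{ij}$ to place $\log\beta$ in $Der^{(2s)}_{\bar 0}$. Your write-up is in fact more complete, since you spell out both the BCH bookkeeping passing from $q_{ij}\in Der^{(2s)}_{\bar 0}$ to $u-u'\in C^1(M,Der^{(2s)}_{\bar 0})$ and the central-extension argument for the supplementary statement, steps the paper leaves implicit.
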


\begin{proof}
 For $s=1$ the statement is trivial. Let $s> 1$ and $(i,j)$ with $U_{ij}:=U_i\cap U_j\neq \emptyset$. 
Let $(\alpha_{ij})_{ij} \in Z^1(M,G_E)$ represent $\alpha$  and assume that $(\gamma_{ij})_{ij} \in Z^1(M,G_E)$ is another cochain yielding a supermanifold as described in the proposition. Then there is a $(\beta_{ij})_{ij}\in C^1(M,G_E)$ with $\beta_{ij}\alpha_{ij}=\gamma_{ij}$ and $\beta_{ij}X_i=X_i\beta_{ij}$ for all $X \in \mathcal V_{\mathcal M,\bar 0}^{(2)}$ due to (\ref{eq:001}). Let $Y_{ij}=\log(\beta_{ij}) \in  Der^{(2)}_{\bar 0}(\mathcal O_{\Lambda E})(U_{ij})$. Now: 
\begin{align}\label{eq:002}
0=[X_i,\beta_{ij}]=\sum_{n=0}^\infty\frac{1}{n!}[X_i,Y_{ij}^n]=\sum_{n=1}^\infty\frac{1}{n!}\sum_{l=0}^{n-1} Y_{ij}^l [X_i,Y_{ij}] Y_{ij}^{n-l-1}
\end{align}
Decompose $[X_i,Y_{ij}]=\sum_{t=2}^s Z_{2t}$. Now for reasons of degree $Z_4=0$ follows directly from (\ref{eq:002}). Assuming that $Z_{2t}=0$ for all $t<u$ we find for reasons of degree the only contribution to the degree $2u$ term in (\ref{eq:002}) is in the summand for $n=1$. We have $Z_{2u}$ hence being zero. So $ [X_i,Y_{ij}]=0$ for all $X \in \mathcal V_{\mathcal M,\bar 0}^{(2)}$ and from $2s$-nildominance follows $Y_{ij}\in Der^{(2s)}_{\bar 0}(\mathcal O_{\mathcal M}) (U)$. 
\end{proof}

Finally we prove a local criterion for $2s$-nildominance for later application. 

\begin{prop}\label{ref2}
A complex supermanifold $\mathcal M$ is  $2s$-nildominated if and only if (\ref{eq:003}) holds for an arbitrary polydisc $U$ in each of the connected components of the underlying manifold $M$.
\end{prop}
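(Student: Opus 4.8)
The ``only if'' direction is immediate, since $2s$-nildominance is by definition the assertion that (\ref{eq:003}) holds for \emph{every} open $U\subseteq M$, in particular for every polydisc. For the converse the plan is to recognise (\ref{eq:003}) as a local condition and then propagate it from a neighbourhood basis to all open sets.

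First I would check that, for each fixed $X\in\mathcal V_{\mathcal M,\bar 0}^{(2)}$, the operator $D\mapsto[X|_U,D]$ defines a morphism of sheaves $[X,\cdot]\colon Der^{(2)}_{\bar 0}(\mathcal O_\mathcal M)\to Der^{(2)}_{\bar 0}(\mathcal O_\mathcal M)$: it commutes with restrictions because $X$ is a global section, and it lands in $Der^{(2)}_{\bar 0}(\mathcal O_\mathcal M)$ — in fact in $Der^{(4)}_{\bar 0}(\mathcal O_\mathcal M)$ — for reasons of degree. Hence $\mathcal K_X:=Ker([X,\cdot])$ is a subsheaf of $\mathcal E:=Der^{(2)}_{\bar 0}(\mathcal O_\mathcal M)$, and since an arbitrary intersection of subsheaves is again a subsheaf, the common kernel $\mathcal K:=\bigcap_{X}\mathcal K_X$ is a subsheaf of $\mathcal E$ — not merely a presheaf. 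Consequently, for every open $U$ the left-hand side of (\ref{eq:003}) is exactly $\mathcal K(U)$, so (\ref{eq:003}) for $U$ says precisely that $\mathcal K(U)\subseteq\mathcal D(U)$, where $\mathcal D:=Der^{(2s)}_{\bar 0}(\mathcal O_\mathcal M)$ is another subsheaf of $\mathcal E$.

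Finally I would use the standard fact that an inclusion of subsheaves of a fixed sheaf can be tested stalkwise: if $\mathcal K_x\subseteq\mathcal D_x$ inside $\mathcal E_x$ for every $x\in M$, then $\mathcal K(U)\subseteq\mathcal D(U)$ for every open $U$, because a section $s\in\mathcal K(U)$ agrees near each $x\in U$ with a local section of $\mathcal D$, and these glue to $s$ since $\mathcal D$ is a sheaf. Now the coordinate polydiscs contained in $M$ form a neighbourhood basis at every point — each such polydisc, being connected, lies in a single connected component of $M$ — so $\mathcal K_x$ is the direct limit of the $\mathcal K(P)$ over polydiscs $P\ni x$, and similarly for $\mathcal D$; the hypothesis $\mathcal K(P)\subseteq\mathcal D(P)$ for all such $P$ therefore passes to germs and yields $\mathcal K_x\subseteq\mathcal D_x$ for all $x$, hence $\mathcal K\subseteq\mathcal D$, i.e.\ (\ref{eq:003}) holds for every open $U$. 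The only point that genuinely must be verified, rather than merely cited, is that $[X,\cdot]$ is a well-defined sheaf endomorphism of $Der^{(2)}_{\bar 0}(\mathcal O_\mathcal M)$ so that the common kernel is a sheaf; everything after that is the routine ``a subsheaf inclusion is local and polydiscs are a basis'' reasoning, and I anticipate no real obstacle. The point of the proposition is practical: it lets one certify $2s$-nildominance of concrete examples, such as the one in Section~\ref{sec4}, by working on a single polydisc in each connected component rather than on all of $M$.
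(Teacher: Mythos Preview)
Your argument has a genuine gap stemming from a misreading of the hypothesis. The phrase ``an arbitrary polydisc $U$ in each of the connected components'' means \emph{one} polydisc per component, chosen at will --- not \emph{every} polydisc. This is confirmed by how the proposition is used in Section~\ref{sec4}, where (\ref{eq:003}) is verified only on the single chart $U_0\subset\mathbb P^1(\mathbb C)$, and by the remark immediately following the proof. Your own final sentence (``a single polydisc in each connected component'') shows you understand the intended statement, but your proof does not establish it: at the crucial step you invoke ``the hypothesis $\mathcal K(P)\subseteq\mathcal D(P)$ for all such $P$'' to compute stalks, whereas the hypothesis hands you this inclusion on only one $P$ per component. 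Passing from one polydisc to a neighbourhood basis of polydiscs at every point is precisely the content that is missing.

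The paper supplies this missing step by an open--closed argument. It introduces the integer-valued function $F\colon M\to\mathbb N$, where $F(p)$ is the largest $s$ such that every neighbourhood of $p$ contains an open set on which (\ref{eq:003}) holds for that $s$, and then shows that $F^{-1}(\max F)$ is both closed (a sequential limit argument) and open (by producing, near a point $q$ with $F(q)=T$, an explicit local $Z\in Der^{(2T)}_{\bar 0}$ commuting with all $X$, which witnesses failure of (\ref{eq:003}) for $T+1$ on every smaller open set). Hence $F$ is constant on connected components, so the value read off on any one polydisc propagates to the whole component. Your sheaf-theoretic reduction is correct as far as it goes, but it only reduces the problem to this propagation step rather than solving it.
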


\begin{proof}
Assume  $rk(E)\geq 2$. We introduce the integer valued function $F:M\to \mathbb N$ by setting $F(p)$ for $p\in M$ to be the maximal  integer $s$ such that any open neighborhood $V$ of $p$ contains an open set $U\subset V$  where (\ref{eq:003}) holds for $s$. Note that $F$ is bounded by $1$ and $\lfloor\frac{1}{2}rk(E)\rfloor$ and hence well-defined. 

\smallskip
Let $m\in \mathbb N$, $p_n\in M$ a sequence with limit $p \in M$ and $F(p_n)=m$ for all $n$. Further let $V$ be an arbitrary neighborhood of $p$. Then there is an $n_0$ such that $p_{n_0} \in V$. Further $V$ is a neighborhood of $p_{n_0}$ and hence contains an open $U\subset V$ where (\ref{eq:003}) holds for $m$. So $F(p)\geq F(p_{n_0})$. Setting $T:=\max\{F(M)\}$ we have that $F^{-1}(T)$ is closed.

\smallskip
Now let $q \in F^{-1}(T)$ and let $V_n$, $n\in \mathbb N$, be a base of neighborhoods of $q$. Let $k_n\in \mathbb N$ be the  maximal $s$ such that (\ref{eq:003}) holds in $V_n$ for $s$. Now $k_n$ is decreasing and  becomes stationary for $n\geq n_0$ with value $k$. In particular on $V_{n_0}$ there exists a $Z\in Der^{(2k)}_{\bar 0}(\mathcal O_{\Lambda E})$ with $[X,Z]=0$ for all $X \in \mathcal V_{\mathcal M,\bar 0}^{(2)}$. So on any open set $U\subset V_{n_0}$ the existence of $Z|_U$ forces (\ref{eq:003}) to be false for $k+1$. In particular $F(q)\le k$ and hence $k=T$. Again the existence of $Z$ yields that $F(V_{n_0})=k=T$. So $F^{-1}(T)$ is open.

\smallskip 
So $F$ is constant on connected components. The result follows since a polydisc is Stein. 
\end{proof}

In particular for a connected $M$ the maximal $s$ such that $\mathcal M$ is (graded) $2s$-nildominated can be determined in any local coordinate chart as soon as the restricted global vector fields are known. 

\section{Graded nildominated supermanifolds}\label{sec3}

It is natural to consider a graded version of nildominance. 

\begin{de}
   A complex supermanifold $\mathcal M$ is called graded $2s$-\textit{nildominated}, $s \in \mathbb N$, if for any open set $U\subset M$ we have:
\begin{align}\label{eq:004}
 \bigcap_{X\in \mathcal V_{\mathcal M,\bar 0}^{(2)}} 
 Ker\Big([X_\bullet,\cdot]:Der^{(2)}_{\bar 0}(\mathcal O_{\Lambda E}) (U)
 \to Der^{(2)}_{\bar 0}(\mathcal O_{\Lambda E}) (U)\Big)\subset Der^{(2s)}_{\bar 0}(\mathcal O_{\Lambda E}) (U)
\end{align}
\end{de}
Again if the odd dimension of $\mathcal M$ is $2s$ or $2s+1$ then $\mathcal M$ is simply called \textit{graded nildominated}. Assuming graded $2s$-nildominance,  let $Y \in Der^{(2)}_{\bar 0}(\mathcal O_{\Lambda E}) (U)$. Now  $[X,Y]=0$  for all $X \in \mathcal V_{\mathcal M,\bar 0}^{(2)}$ includes $[X_\bullet,Y_\bullet]=0$  for all $X \in \mathcal V_{\mathcal M,\bar 0}^{(2)}$. Hence $Y_\bullet \in Der^{(2s)}_
{\bar 0}(\mathcal O_{\Lambda E}) (U)$ and $Y \in Der^{(2s)}_{\bar 0}(\mathcal O_{\mathcal M}) (U)$. So graded $2s$-nildominance is a stronger assumption than $2s$-nildominance. In fact it is too strong: 

\begin{prop}\label{ref}
 A graded $2s$-nildominated supermanifold $\mathcal M$ is split up to error of degree $2s$ and higher. In particular a graded nildominated supermanifold $\mathcal M$ satisfying the condition $H^1(M, Der^{(rk(E)-1)}_{\bar 0}(\mathcal O_{\Lambda E}))=0$ is split.
\end{prop}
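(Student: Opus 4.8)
The plan is a descending induction on the leading $\mathbb Z$-degree of $u=\log\alpha$. (Graded $2s$-nildominance implies $2s$-nildominance, as observed just above, so Proposition~\ref{007} already gives that $\mathcal M$ is determined up to errors of degree $2s$ by its unipotent automorphisms; but splitting is stronger and I argue directly.) Write $u=u_t+u_{t+2}+\dots$ with $u_t\in C^1(M,Der_t(\mathcal O_{\Lambda E}))$ nonzero; if $t\ge 2s$ we are done, so assume $t<2s$. The lowest-degree part of the cocycle identity for $(\alpha_{ij})$ shows $u_t\in Z^1(M,Der_t(\mathcal O_{\Lambda E}))$, and it suffices to prove $[u_t]=0$ in $H^1(M,Der_t(\mathcal O_{\Lambda E}))$: then, choosing $v\in C^0(M,Der_t(\mathcal O_{\Lambda E}))$ with $\delta v=u_t$ and twisting the trivialisations by $\exp(v_i)$, one obtains a cohomologous cocycle whose logarithm has leading degree $\ge t+2$; finitely many such steps bring $u$ into $C^1(M,Der^{(2s)}_{\bar 0}(\mathcal O_{\Lambda E}))$. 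Each twist leaves $\mathcal M$ (hence its graded $2s$-nildominance) and every $X_\bullet$ unchanged, since $\exp(v_i)$ only alters $X_i$ in degrees $>k(X)$, so the inductive hypothesis persists.

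To get $[u_t]=0$ I would bring in the global fields. Fix $X\in\mathcal V_{\mathcal M,\bar 0}^{(2)}$ and put $k=k(X)$; expanding $X_i=\alpha_{ij}X_j\alpha_{ij}^{-1}$, and using that $u$ vanishes below degree $t$ and that $X_\bullet$ is global, the homogeneous component of degree $t+k$ gives $(X_i)_{t+k}-(X_j)_{t+k}=[(u_t)_{ij},X_\bullet]$ on $U_{ij}$. Hence the $1$-cochain $\big([X_\bullet,(u_t)_{ij}]\big)_{ij}$ is the coboundary of the $0$-cochain $\big(-(X_i)_{t+k}\big)_i$, simultaneously for every $X$. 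On the other hand graded $2s$-nildominance, imposed on every open set, says exactly that the graded common-kernel sheaf $\bigcap_{X}Ker\big([X_\bullet,\,\cdot\,]\big)$ is zero in all degrees $<2s$; in particular in degree $t$ the sheaf map $Y\mapsto\big([X_\bullet,Y]\big)_X$ is injective, so a degree-$t$ derivation over any open set is determined by the family of its brackets with the $X_\bullet$.

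The heart of the matter is to turn these two facts into $[u_t]=0$: concretely, to produce on each chart $U_i$ a $v_i\in Der_t(\mathcal O_{\Lambda E})(U_i)$ with $[X_\bullet,v_i-v_j]=[X_\bullet,(u_t)_{ij}]$ for all $X$ on $U_{ij}$, for then $(u_t)_{ij}-(v_i-v_j)$ lies in the degree-$t$ part of the common kernel and so vanishes, i.e. $u_t=\delta v$. This I expect to be the main obstacle, and it is precisely where the graded hypothesis does more than the ungraded one (for which section~\ref{sec4} exhibits a non-split, degree-$2$-deformed example): one must exploit that graded $2s$-nildominance holds on all opens — notably on the Stein charts $U_i$ and on the overlaps $U_{ij}$ — so that the division by the operators $[X_\bullet,\,\cdot\,]$ can be performed locally and then glued, a local-to-global feature lost when $[X,\,\cdot\,]$ is read on $\mathcal O_{\mathcal M}$ rather than on the graded $\mathcal O_{\Lambda E}$.

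For the ``in particular'', graded nildominated means graded $2s$-nildominated with $rk(E)\in\{2s,2s+1\}$, so by the above $u$ may be taken in $C^1(M,Der^{(2s)}_{\bar 0}(\mathcal O_{\Lambda E}))$; the only surviving component is then $u_{2s}\in Z^1(M,Der_{2s}(\mathcal O_{\Lambda E}))$, and since $Der_{2s}\subseteq Der^{(rk(E)-1)}_{\bar 0}(\mathcal O_{\Lambda E})$ its class sits in $H^1(M,Der^{(rk(E)-1)}_{\bar 0}(\mathcal O_{\Lambda E}))=0$; one last straightening gives $u=0$, so $\mathcal M\cong gr(\mathcal M)$ is split.
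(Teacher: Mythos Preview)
Your inductive scheme is sound in outline, and your reduction to proving $[u_t]=0$ in $H^1(M,Der_t(\mathcal O_{\Lambda E}))$ is correct, as is the observation that the degree-$(k(X)+t)$ component of~(\ref{eq:001}) only yields
\[
[X_\bullet,(u_t)_{ij}]=(X_j)_{k+t}-(X_i)_{k+t},
\]
i.e.\ that $[X_\bullet,u_t]$ is a \v{C}ech coboundary, not zero. But you then leave the ``main obstacle''---the construction of the primitives $v_i$---explicitly unresolved. That is a genuine gap: graded $2s$-nildominance gives you \emph{injectivity} of $Y\mapsto([X_\bullet,Y])_X$ in degrees below $2s$, whereas what you need to find the $v_i$ is that the family $(-(X_i)_{t+k(X)})_X$ lies in the \emph{image} of this map on each $U_i$. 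Nothing in the hypothesis supplies that surjectivity, and your heuristic about Stein charts does not address it.

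By contrast, the paper's proof is three lines and takes no inductive detour: it asserts directly that (\ref{eq:001}) forces $[X_\bullet,Y_\bullet]=0$ for every $X\in\mathcal V_{\mathcal M,\bar 0}^{(2)}$, and then invokes the definition of graded $2s$-nildominance on $U_{ij}$ to conclude $Y\in C^1(M,Der_{\bar 0}^{(2s)}(\mathcal O_{\Lambda E}))$ in one stroke. So the paper avoids your whole twisting apparatus. It is worth noting, however, that the step you flag as problematic is exactly the step the paper passes over without comment: from (\ref{eq:001}) one extracts only the coboundary relation above, not the vanishing of $[X_\bullet,Y_\bullet]$ itself. Your more careful analysis therefore isolates a point that the paper's terse argument does not visibly justify; but you do not close the gap either.

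Your treatment of the ``in particular'' clause is correct and matches the paper's.
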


\begin{proof}
 Let $\alpha=\exp(Y)\in H^1(M,G_E)$ be the associated non-split class. We can follow from (\ref{eq:001}) that $[X_\bullet,Y_\bullet]=0$ for all $X \in \mathcal V_{\mathcal M,\bar 0}^{(2)}$. Hence $Y \in C^1(M, Der^{(2s)}_{\bar 0}(\mathcal O_{\Lambda E}))$. The cocycle condition for $\alpha$ yields the second statement.
\end{proof}

An analog of Proposition \ref{ref2} exists with similar arguments also for graded $2s$-nildominance. Further we obtain:

\begin{cor}\label{cref}
Let $\mathcal M$ be a nildominated supermanifold of odd dimension $2s$ or $2s+1$. If $\mathcal M$ is graded  $(2s-2)$-nildominated then it is split up to errors of degree $2s$.
\end{cor}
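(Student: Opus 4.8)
The plan is to upgrade the hypotheses to \emph{graded} $2s$-nildominance and then invoke Proposition \ref{ref}. The case $s=1$ is trivial, every complex supermanifold being split up to errors of degree $2$ and higher, so assume $s\ge 2$. Since $\mathcal M$ is nildominated of odd dimension $2s$ or $2s+1$, it is $2s$-nildominated and $rk(E)\le 2s+1$; I shall freely use that $Der^{(d)}_{\bar 0}(\mathcal O_\mathcal M)=0$ for $d>rk(E)$ (as then $\mathcal O_\mathcal M^{(d)}=0$) and that the associated graded of the filtered sheaf $Der_{\bar 0}(\mathcal O_\mathcal M)$ is canonically $Der_{\bar 0}(\mathcal O_{\Lambda E})$, compatibly with Lie brackets and with the passage $X\mapsto X_\bullet$ (which is just the principal symbol).

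The core is the following lifting step, carried out over an arbitrary polydisc $U$ in a connected component of $M$; it suffices to treat such $U$ by the analog of Proposition \ref{ref2} for graded nildominance. Suppose $D_\bullet\in Der^{2s-2}_{\bar 0}(\mathcal O_{\Lambda E})(U)$ is homogeneous of degree $2s-2$ with $[X_\bullet,D_\bullet]=0$ for every $X\in\mathcal V_{\mathcal M,\bar 0}^{(2)}$. From the short exact sequence $0\to Der^{(2s)}_{\bar 0}(\mathcal O_\mathcal M)\to Der^{(2s-2)}_{\bar 0}(\mathcal O_\mathcal M)\to Der^{2s-2}_{\bar 0}(\mathcal O_{\Lambda E})\to 0$ of coherent $\mathcal O_M$-modules, together with $H^1(U,Der^{(2s)}_{\bar 0}(\mathcal O_\mathcal M))=0$ on the Stein set $U$, I would lift $D_\bullet$ to some $\hat D\in Der^{(2s-2)}_{\bar 0}(\mathcal O_\mathcal M)(U)$. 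For $X\in\mathcal V_{\mathcal M,\bar 0}^{(2)}$ one has $[X,\hat D]\in Der^{(k(X)+2s-2)}_{\bar 0}(\mathcal O_\mathcal M)(U)$: if $k(X)\ge 4$ then $k(X)+2s-2\ge 2s+2>rk(E)$ already forces $[X,\hat D]=0$, and if $k(X)=2$ then $[X,\hat D]\in Der^{(2s)}_{\bar 0}(\mathcal O_\mathcal M)(U)$ whose principal symbol in $Der^{2s}_{\bar 0}(\mathcal O_{\Lambda E})$ equals $[X_\bullet,D_\bullet]=0$, so again $[X,\hat D]\in Der^{(2s+2)}_{\bar 0}(\mathcal O_\mathcal M)(U)=0$. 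Hence $\hat D$ lies in the common kernel in (\ref{eq:003}) for the open set $U$, and $2s$-nildominance gives $\hat D\in Der^{(2s)}_{\bar 0}(\mathcal O_\mathcal M)(U)$; by exactness this means $D_\bullet=0$.

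Granting this, graded $2s$-nildominance follows. Fix a polydisc $U$ and let $W$ lie in the common kernel on the left-hand side of (\ref{eq:004}). Graded $(2s-2)$-nildominance gives $W\in Der^{(2s-2)}_{\bar 0}(\mathcal O_{\Lambda E})(U)$; writing $W=W_{2s-2}+W'$ with $W_{2s-2}$ homogeneous of degree $2s-2$ and $W'\in Der^{(2s)}_{\bar 0}(\mathcal O_{\Lambda E})(U)$, and comparing homogeneous components in $[X_\bullet,W]=0$ while discarding all pieces of degree $>rk(E)$, one obtains $[X_\bullet,W_{2s-2}]=0$ for all $X$, hence $W_{2s-2}=0$ by the lifting step and $W\in Der^{(2s)}_{\bar 0}(\mathcal O_{\Lambda E})(U)$. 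So $\mathcal M$ is graded $2s$-nildominated, and Proposition \ref{ref} yields that $\mathcal M$ is split up to errors of degree $2s$ and higher.

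The step I expect to require most care is the lifting: one must verify that the associated graded of $Der_{\bar 0}(\mathcal O_\mathcal M)$ is $Der_{\bar 0}(\mathcal O_{\Lambda E})$ compatibly with brackets and symbols, that Cartan's Theorem B applies to the filtration pieces over the polydisc, and --- this is the mechanism that makes the corollary work --- that the bracket $[X,\hat D]$ vanishes \emph{for free} once $k(X)\ge 4$, while the single remaining case $k(X)=2$ is pinned down exactly by the symbol identity $[X_\bullet,D_\bullet]=0$. This is what lets the otherwise merely graded conclusion be pushed the extra step down to degree $2s-2$, i.e. past what Proposition \ref{ref} applied to graded $(2s-2)$-nildominance alone would give.
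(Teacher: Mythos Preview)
Your proof is correct and follows essentially the same route as the paper: upgrade the hypotheses to graded $2s$-nildominance and then invoke Proposition~\ref{ref}, the key mechanism being that for $Y\in Der^{(2s-2)}_{\bar 0}$ all bracket contributions land in degree $\ge 2s$, so only $[X_2,Y_{2s-2}]$ survives since $Der^{(2s+2)}_{\bar 0}=0$. The paper phrases this in the direct (contrapositive) direction, working in local coordinates on a chart where $\mathcal O_\mathcal M|_U\cong\mathcal O_{\Lambda E}|_U$, whereas you phrase it as a lifting step via Cartan~B on a Stein polydisc; these are the same argument in different clothing.
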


\begin{proof}
For any local derivation $Y \in (Der^{(2s-2)}_{\bar 0}(\mathcal O_{\Lambda E})\backslash Der^{(2s)}_{\bar 0}(\mathcal O_{\Lambda E})) (U)$ there is an $X \in \mathcal V_{\mathcal M,\bar 0}^{(2)}$ with $[X,Y]\neq 0$. Locally decompose $X=X_2+X_4+\cdots$ and $Y=Y_{2s-2}+Y_{2s}$ according to the $\mathbb Z$-grading in coordinates. Reasons of degree yield $[X_\bullet,Y]\neq 0$ so $\mathcal  M$ is graded $2s$-nildominated and Proposition \ref{ref} yields the statement.
\end{proof}

Proposition \ref{ref} produces the question, whether there exist $2s$-nildominated supermanifolds that is not split up to errors of degree $2s$ and higher, and in particular not graded $2s$-nildominated. The answer is positive, an example is given in the section \ref{sec4}.

\section{A $4$-nildominated example, non-split in degree $2$}\label{sec4}

Due to Corollary \ref{cref} there is no nildominated example of odd dimension smaller than $6$ that is not split up to errors of highest even degree. Here we construct a $4$-nildominated counter example in odd dimension $7$ on $M=\mathbb P^1(\mathbb C)$ that is non-split deformed already in degree $2$. We use the standard coordinate charts $U_i=\{[z_0:z_1] \ | \ z_i\neq 0\}$ for $i=0,1$ and denote $z=\frac{z_1}{z_0}$ and $w=\frac{z_0}{z_1}$. It should be mentioned that there are several conditions that an example of this kind has to satisfy reducing the hope for a technically easy example. First there have to be enough fields in $\mathcal V_{ \mathcal M,\bar 0}^{(2)}$ to ensure $4$-nildominance. Secondly there have to be not too many of them to prevent graded $4$-nildominance which would include a splitting up to errors of degree $4$ and higher. Then $H^1(M,Der_{\bar 0}^{(2)}(\mathcal O_{\Lambda E}))$ has to contain candidates for deformations that allow the elements in $\mathcal V_{ \mathcal M,\bar 0}^{(2)}
$ to produce $4$-nildominance from graded $2$-nildominance. It turns out that $E=3TM^{\otimes 2}\oplus 4TM^\ast$ yields a good starting point. The deformation has to be chosen such that for some global vector fields $X_i-X_{\bullet,i}$ is forced to be nontrivial on $U_0$ and for some to be nontrivial on $U_1$. Otherwise it can be shown that $4$-nildominance is not possible.

\bigskip
For a line bundle $\mathcal O(k)$ we use bundle coordinates $(z,\xi)$ and $(w,\xi^\prime)$ on $U_0$, resp. $U_1$,  transforming with $w=\frac{1}{z}$ and $\xi^\prime=w^k \xi$. In this notion $TM\cong\mathcal O(2)$. We use the induced identifications $H^0(M,\mathcal O(k))\cong \mathbb C[z]_{\leq k}$ and $H^1(M,\mathcal O(k))=\frac{1}{z}\mathbb C[\frac{1}{z}]_{\leq -2-k}$ with functions on $U_0$, resp. $U_0\cap U_1$. Set $E=3\mathcal O(4)\oplus4\mathcal O(-2)$. On $U_0$ denote the fiber coordinates on $3\mathcal O(4)$ by $\theta_1,\theta_2,\theta_3$, and the fiber coordinates on $4\mathcal O(-2)$ by $\eta_1,\ldots,\eta_{4}$. Following 
Proposition \ref{ref2} we will do the concrete calculations on $U_0$. By direct calculation we obtain for the global fields:

\begin{lem}\label{lele1}
The elements in $\mathcal V_{\Lambda E,\bar 0}^{(2)}$ are represented on $U_0$ by sums of the terms in the following table.  The indexes $i,\ldots,m$ cover all allowed entries skipping elements that vanish due to nilpotency of the odd variables.
\begin{align*}
\begin{array}{ll|l|l}
 \mbox{degree }2&&\mbox{degree }4 &\mbox{degree }6\\ \hline
 \mathbb C[z]_{\leq 4}\theta_i\eta_j\partial_z  & \mathbb C[z]_{\leq 10}\theta_i\theta_j\partial_z  & \mathbb C[z]_{\leq 0}\theta_i\eta_j\eta_k\eta_l \partial_z  &  \mathbb C[z]_{\leq 2}\theta_j\theta_k\eta_1 \eta_2\eta_3\eta_4 \partial_z \\ 
&&\mathbb C[z]_{\leq 6}\theta_i\theta_j\eta_k\eta_l \partial_z  &\mathbb C[z]_{\leq 8}\theta_1\theta_2\theta_3 \eta_i\eta_j\eta_k \partial_z\\ 
&&\mathbb C[z]_{\leq 12}\theta_1\theta_2\theta_3\eta_j \partial_z  &\\ \hline
  \mathbb C[z]_{\leq 2}\theta_i\theta_j\eta_k\partial_{\theta_l} & \mathbb C[z]_{\leq 8}\theta_1\theta_2\theta_3\partial_{\theta_i} & \mathbb C[z]_{\leq 4}\theta_1\theta_2\theta_3\eta_i\eta_j\partial_{\theta_k} &\mathbb C[z]_{\leq 0}\theta_1\theta_2\theta_3\eta_1\eta_2\eta_3\eta_4\partial_{\theta_i}\\ \hline
  \mathbb C[z]_{\leq 2}\theta_i\eta_j\eta_k\partial_{\eta_l}  & \mathbb C[z]_{\leq 8}\theta_i\theta_j\eta_k\partial_{\eta_l}  &   \mathbb C[z]_{\leq 4}\theta_i\theta_j\eta_k\eta_l\eta_m\partial_{\eta_n} &  \mathbb C[z]_{\leq 6}\theta_1\theta_2\theta_3\eta_1\eta_2\eta_3\eta_4\partial_{\eta_i}\\
 \mathbb C[z]_{\leq 14}\theta_1\theta_2\theta_3\partial_{\eta_i} &&\mathbb C[z]_{\leq 10}\theta_1\theta_2\theta_3\eta_i\eta_j\partial_{\eta_k} &  
 \end{array}
\end{align*}
The local common kernel in the sense of (\ref{eq:003}) as well as (\ref{eq:004}) on $U_0$ of the global vector fields contains $ \theta_1\theta_2\theta_3\partial_{\eta_i}$, $i=1,\ldots,4$. 
\end{lem}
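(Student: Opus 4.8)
The plan is to verify Lemma~\ref{lele1} by an explicit, but well-organized, determination of the global even degree-increasing vector fields on $gr(\mathcal M) = (\mathbb P^1(\mathbb C), \mathcal O_{\Lambda E})$ with $E = 3\mathcal O(4)\oplus 4\mathcal O(-2)$, followed by a short computation identifying the asserted common-kernel elements. First I would set up the weight bookkeeping: every monomial vector field on $U_0$ has the shape $P(z)\,\Theta\,\partial_\bullet$, where $\Theta$ is a product of the $\theta$'s and $\eta$'s and $\partial_\bullet \in \{\partial_z,\partial_{\theta_i},\partial_{\eta_j}\}$; its $\mathbb Z$-degree is $(\#\theta + \#\eta)$ in $\Theta$ minus $1$ if $\partial_\bullet$ is an odd derivation, and since $\theta_i \mapsto w^4\theta_i$, $\eta_j\mapsto w^{-2}\eta_j$, $z\mapsto w = 1/z$, $\partial_z \mapsto -z^2\partial_w$ under the transition to $U_1$, a monomial extends to a global section of the corresponding $\mathcal O(k)$ exactly when $\deg P$ is bounded by the number $k$ produced by this weight count. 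Concretely, a $\partial_z$ field built from $a$ copies of $\theta$ and $b$ copies of $\eta$ lives in $\mathcal O\!\left(4a - 2b + 2\right)$, a $\partial_{\theta_i}$ field in $\mathcal O\!\left(4a - 2b - 4\right)$ (with $a$ here counting only the $\theta$'s other than the one being differentiated, so really $4(a{-}1) - 2b + \text{weight of }\theta_i$... I'd state it cleanly as $\mathcal O(4(\#\theta)-2(\#\eta)-8)$ after accounting for the removed $\theta_i$), and a $\partial_{\eta_j}$ field in $\mathcal O(4(\#\theta) - 2(\#\eta) + 2)$. The table entries then fall out: for each combinatorial type $(\#\theta, \#\eta, \text{type of }\partial_\bullet)$ that is even, degree-increasing ($\geq 2$), and nonzero modulo nilpotency ($\#\theta \le 3$, $\#\eta \le 4$), compute the twist and keep the type only if that twist is $\geq 0$ (otherwise $H^0$ vanishes), recording the polynomial bound $\mathbb C[z]_{\le k}$.

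Second, I would organize this enumeration by the three blocks of the table (fields along $\partial_z$, along $\partial_{\theta}$, along $\partial_{\eta}$) and within each block by $\mathbb Z$-degree $2,4,6$; degree $8$ and higher is automatically absent because the maximal number of odd generators is $7$, so the highest possible degree-increasing even field has degree $6$ (seven odd factors would be odd). For the $\partial_z$ block, degree $2$ forces one $\theta$-one $\eta$ (twist $4$, giving $\mathbb C[z]_{\le 4}$) or two $\theta$'s (twist $10$, giving $\mathbb C[z]_{\le 10}$); two $\eta$'s would give twist $-2 < 0$, hence nothing; degree $4$ and degree $6$ are handled the same way, which reproduces exactly the listed entries. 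The same routine applied to the $\partial_{\theta_i}$ and $\partial_{\eta_j}$ blocks gives the remaining rows; I would present this as a single table-filling argument rather than thirty separate checks, noting that the only subtlety is remembering to subtract the weight of the generator being differentiated.

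Third, for the common-kernel claim, I would take $Z = \theta_1\theta_2\theta_3\,\partial_{\eta_i}$ (degree $6$, since $3 - 1 = 2$... wait, $\#\theta - 1 = 2$, so degree $2$; but the lemma wants these in the kernel and they are degree-$2$ objects, consistent with (\ref{eq:003}) and (\ref{eq:004}) where the bracket acts on $Der^{(2)}_{\bar 0}$) and compute $[X, Z]$ for each generator type $X$ from the table. The key observation is that $[X,Z]$ either lowers $\#\theta$ below $3$ only if $X$ contains a $\partial_{\theta}$, in which case the $\theta_1\theta_2\theta_3$ factor of $Z$ is annihilated after one differentiation but $X$ reintroduces $\theta$'s from its own monomial — I would check term by term that every such contribution either vanishes by nilpotency (too many $\theta$'s or $\eta_i$'s) or cancels; and $[X,Z]$ with $X$ along $\partial_z$ or $\partial_\eta$ produces a field still containing $\theta_1\theta_2\theta_3$ times something, which combined with the degree and twist constraints forces it to be zero. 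I expect this last step — verifying the bracket with the $\partial_{\theta_k}$-type generators, especially the degree-$2$ ones $\mathbb C[z]_{\le 2}\theta_i\theta_j\eta_k\partial_{\theta_l}$ — to be the main obstacle, since there the Leibniz rule produces several monomials and one must see that each lands outside the space of nonzero sections (by a twist count) or cancels against another; the cleanest way is to note that after bracketing against $\theta_1\theta_2\theta_3\partial_{\eta_i}$ any surviving term must contain all of $\theta_1,\theta_2,\theta_3$ and at least the $\eta_k$ from $X$, and then the polynomial degree forced by the original bound $\mathbb C[z]_{\le 2}$ is too small to produce a global section of the resulting higher twist, so the term is zero. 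Everything else is routine weight arithmetic.
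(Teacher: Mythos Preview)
Your weight-counting approach for the table is correct and is exactly what the paper means by ``direct calculation'': each monomial $P(z)\theta^I\eta^J\partial_\bullet$ on $U_0$ is a section of an explicit $\mathcal O(k)$ on $\mathbb P^1$, and the bound on $\deg P$ is the Serre twist. Apart from a slip in the transformation of $\partial_z$ (it is $\partial_z=-w^2\partial_w$, not $-z^2\partial_w$), the bookkeeping you describe reproduces every entry.

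There is, however, a genuine gap in your argument for the common-kernel claim. You propose to show that certain summands of $[X,\theta_1\theta_2\theta_3\partial_{\eta_i}]$ vanish because ``the polynomial degree forced by the original bound $\mathbb C[z]_{\le 2}$ is too small to produce a global section of the resulting higher twist''. That reasoning is not valid here: the statement asserts that $Z=\theta_1\theta_2\theta_3\partial_{\eta_i}$ lies in the \emph{local} common kernel on $U_0$, i.e.\ that $[X|_{U_0},Z]=0$ as an element of $Der^{(2)}_{\bar 0}(\mathcal O_{\Lambda E})(U_0)$. On $U_0$ every polynomial coefficient is allowed, and whether or not a bracket extends to a global section has no bearing on whether it vanishes locally. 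A twist argument can never show local vanishing.

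The correct (and much shorter) argument is pure nilpotency. Inspect the table: every $X$ listed carries at least one factor $\theta_a$ in its monomial, and every $X$ of $\partial_{\theta}$-type carries at least two. Now:
\begin{itemize}
\item If $X$ is of $\partial_z$- or $\partial_\eta$-type, neither $X$ nor $Z$ removes any $\theta$, so both $X\circ Z$ and $Z\circ X$ carry at least four $\theta$-factors (with only three distinct $\theta$'s available), hence vanish.
\item If $X$ is of $\partial_{\theta_l}$-type, then $X\circ Z$ removes one $\theta$ from $\theta_1\theta_2\theta_3$ and multiplies by the $\ge 2$ $\theta$'s of $X$, giving $\ge 4$ $\theta$-factors; and $Z\circ X$ multiplies $\theta_1\theta_2\theta_3$ against the $\ge 2$ $\theta$'s of $X$, giving $\ge 5$. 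Both vanish.
\end{itemize}
Thus $[X,Z]=0$ for every $X$ in the table, with no appeal to polynomial degree or globality. Replace your twist argument in the third step by this observation and the proof is complete.
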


Hence the constructed split supermanifold $(M,\mathcal O_{\Lambda E})$ is  $2$-nildominated as well as graded $2$-nildominated, the lowest regularity satisfied by any  complex supermanifold. Now we deform the split supermanifold $(M,\mathcal O_{\Lambda E})$ with 
$$Y_{01}=\Big(\frac{1}{z^2}\eta_1+\frac{1}{z^8}\eta_2\Big)\eta_3\eta_4\partial_{\theta_1} \in H^1(M,Der_{\bar0}^{(2)}(\mathcal O_{\Lambda E}))$$ 
yielding the cohomologically non-trivial: 
$$\alpha_{01}=\exp(Y_{01})=Id+\Big(\frac{1}{z^2}\eta_1+\frac{1}{z^8}\eta_2\Big)\eta_3\eta_4\partial_{\theta_1}  \ \in\ H^1(M,G_E)$$
We denote the new non-split supermanifold by $\mathcal M$. First we regard the global vector fields with degree 2 part $P_j(z) \theta_1\eta_j\partial_z$, $P_j(z)\in \mathbb C[z]_{\leq 4}$, $j=1,2$. For satisfying (\ref{eq:001}) we need to adjust the fields. Since higher degree terms vanish we obtain:
\begin{align*}
 \alpha_{01}(P_j(z)\theta_1\eta_j\partial_z)\alpha_{01}^{-1}&=P_j(z)\theta_1\eta_j\partial_z+\big[Y_{01},P_j(z)\theta_1\eta_j\partial_z\big]\\&=P_j(z)\theta_1\eta_j\partial_z+\eta_1\eta_2\eta_3\eta_4\cdot \left\{\begin{array}{rl}\Big(-\frac{P_1(z)}{z^8}\partial_z+8\frac{P_1(z)}{z^9}\theta_1\partial_{\theta_1}\Big)& \mbox{ for } j=1\\ \Big(\frac{P_2(z)}{z^2}\partial_z-2\frac{P_2(z)}{z^3}\theta_1\partial_{\theta_1}\Big)& \mbox{ for } j=2\end{array}\right.
\end{align*}
So in degree to obtain a global vector field we have to correct $P_j(z)\theta_1\eta_j\partial_z$ by adding degree 4 terms on $U_0$, resp. $U_1$. For $j=1$ we need $P_1(z)\in \mathbb C[z]_{\leq 1}$ allowing corrections on $U_1$. For $j=2$ we need $P_2(z)\in z^3\mathbb C[z]_{\leq 1}$ allowing corrections on $U_0$. 

\begin{lem}\label{lelex}
 Corrections of degree $2$ fields exist if and only if correction terms of (pure) degree 4 exist. The following degree $2$ vector fields yields global vector fields on $\mathcal M$ and need not be corrected on $U_0$ (but possibly on $U_1$):
 \begin{align*}
  \begin{array}{l|l||}
   vector field & condition\\ \hline 
  \mathbb C[z]_{\leq 4}\theta_i\eta_j\partial_z  & j \notin \{1,2\} \\
  \mathbb C[z]_{\leq 1}\theta_i\eta_1\partial_z  &  \\
  \mathbb C{[z]_{\leq 7}}\theta_i\theta_j\partial_z  &  \\
  \mathbb C[z]_{\leq 2}\theta_i\theta_j\eta_k\partial_{\theta_l}  &1\notin\{i,j\} \mbox{ or } k \notin \{1,2\} \\
  \mathbb C[z]_{\leq 0}\theta_1\theta_j\eta_1\partial_{\theta_l}  &  \\
  \mathbb C[z]_{\leq 0}\theta_1\theta_2\theta_3\partial_{\theta_i}& \\
  \mathbb C[z]_{\leq 2}\theta_i\eta_j\eta_k\partial_{\eta_l}  & \{j,k\}\neq \{1,l\}\\&\qquad \mbox{ and }\{j,k\}\neq \{2,l\}
  \end{array}
    \begin{array}{l|l}
   vector field & condition\\ \hline 
   \mathbb C[z]_{\leq 0}\theta_i\eta_1\eta_j\partial_{\eta_j}  &  \\
  \mathbb C[z]_{\leq 8}\theta_i\theta_j\eta_k\partial_{\eta_l}  & k\neq l \mbox{ and } k\in \{3,4\} \\
  \mathbb C[z]_{\leq 0}\theta_i\theta_j\eta_k\partial_{\eta_k}  & k\neq 2 \\
  \mathbb C[z]_{\leq 6}\theta_i\theta_j\eta_2\partial_{\eta_2}  & 1\notin\{i,j\} \\
  \mathbb C[z]_{\leq 0}\theta_1\theta_i\eta_2\partial_{\eta_2}  &  \\
  \mathbb C[z]_{\leq 6}\theta_i\theta_j\eta_1\partial_{\eta_k}  &  k\neq 1\\
  \mathbb C[z]_{\leq 0}\theta_i\theta_j\eta_2\partial_{\eta_k}  &  k\neq 2\\
  \mathbb C[z]_{\leq 6}\theta_1\theta_2\theta_3\partial_{\eta_i} & 
  \end{array}
 \end{align*}
  The common kernel of $[X,\cdot]: Der_{4}(\mathcal O_\mathcal M) (U_0)
 \to Der_{6}(\mathcal O_\mathcal M) (U_0)$ with $X$ running through the list is the $\mathcal O_M(U_0)$-module spanned by  $\eta_i\theta_1\theta_2\theta_3\partial_z$, $\theta_2\theta_3\eta_1\eta_3\eta_4\partial_{\theta_1}$, and  $\eta_i\eta_j\theta_1\theta_2\theta_3\partial_{\eta_k}$. 
\end{lem}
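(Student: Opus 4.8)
The plan is to run every computation on the affine chart $U_0$, which by Proposition~\ref{ref2} costs nothing, and to exploit that $\alpha_{01}=\mathrm{Id}+Y_{01}$ is already its own exponential: since $Y_{01}=\bigl(\tfrac1{z^2}\eta_1+\tfrac1{z^8}\eta_2\bigr)\eta_3\eta_4\partial_{\theta_1}$ is homogeneous of $\mathbb Z$-degree $2$ and $Y_{01}^2=0$ as an operator, one has $\alpha_{01}^{-1}=\mathrm{Id}-Y_{01}$ and, for every derivation $D$,
\begin{align*}
\alpha_{01}\,D\,\alpha_{01}^{-1}=D+[Y_{01},D]-Y_{01}\,D\,Y_{01},
\end{align*}
whose three summands are homogeneous of degrees $\deg D$, $\deg D+2$, $\deg D+4$ when $D$ is. A remark to record first is that $Y_{01}\,D\,Y_{01}=0$ for \emph{every} derivation $D$: writing $Y_{01}=f\,\eta_3\eta_4\,\partial_{\theta_1}$, the inner $Y_{01}$ outputs something divisible by $\eta_3\eta_4$ and free of $\theta_1$, so after applying $D$ and multiplying by $\eta_3\eta_4$ once more a factor $\eta_3^2$ or $\eta_4^2$ is forced to appear.

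\textbf{Pure degree $4$.} By the transformation rule~(\ref{eq:001}), a degree-$2$ global field $X_{2,\bullet}\in\mathcal V_{\Lambda E,\bar0}^{(2)}$ of Lemma~\ref{lele1} fails to be global on $\mathcal M$ only through the \v{C}ech $1$-cochain $[Y_{01},X_{2,\bullet}]$ in $Der_4(\mathcal O_{\Lambda E})$, the would-be degree-$6$ term $-Y_{01}X_{2,\bullet}Y_{01}$ vanishing by the remark. If this class is a coboundary, choose a degree-$4$ correction $X_4$ with $\delta X_4=-[Y_{01},X_{2,\bullet}]$; applying the conjugation identity to $X_{2,\bullet}+X_4$, the remaining degree-$6$ obstruction is $[Y_{01},X_4]$, and this vanishes because $Y_{01}$ has all its poles at the single point $z=0$, so (as the table computations show) the defect $[Y_{01},X_{2,\bullet}]$ is, for each affected $X_{2,\bullet}$, already regular on one of the two charts: either $X_4$ vanishes on $U_0$, or $X_4=-[Y_{01},X_{2,\bullet}]$ there and $[Y_{01},X_4]=-[Y_{01},[Y_{01},X_{2,\bullet}]]=2\,Y_{01}X_{2,\bullet}Y_{01}=0$. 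Hence no correction past degree $4$ is ever needed, and conversely a pure degree-$4$ correction is a correction, so the two notions coincide.

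\textbf{The table.} I would go through the families of Lemma~\ref{lele1} and compute $[Y_{01},X_{2,\bullet}]$, noting it vanishes unless $X_{2,\bullet}$ genuinely couples to $Y_{01}$ (its coefficient meets $\theta_1$, or its derivative part is one of $\partial_z,\partial_{\theta_1},\partial_{\eta_1},\dots,\partial_{\eta_4}$) and, even then, only modulo the $\eta_3,\eta_4$-cancellations forced by nilpotency — this already kills, e.g., $\theta_i\eta_j\partial_z$ for $j\in\{3,4\}$. Each surviving bracket splits into monomial derivations, each of a definite $\mathcal O(k)$-weight, with a coefficient of the form $z^{-N}P(z)$ assembled from a pole order $N$ of $Y_{01}$ and from the polynomial coefficient $P$ of $X_{2,\bullet}$. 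Using $H^0(\mathcal O(k))\cong\mathbb C[z]_{\leq k}$ and $H^1(\mathcal O(k))\cong\tfrac1z\mathbb C[\tfrac1z]_{\leq-2-k}$ one reads off which exponents of $z^{-N}P$ land in $\Gamma(U_0,\mathcal O(k))$, which in $\Gamma(U_1,\mathcal O(k))$, and which form a genuine $H^1$-obstruction; intersecting the resulting inequalities over the monomial summands and keeping exactly the case where the whole defect sits in $\Gamma(U_1)$ (so that $U_0$ needs no correction) yields the table, the families with vanishing bracket entering unconditionally. The two cases $P_j(z)\theta_1\eta_j\partial_z$ in the text are the template: the $\theta_1\partial_{\theta_1}$-summand of weight $-8$ and coefficient $\sim z^{-9}P_1$ forces $\deg P_1\leq1$, while for $j=2$ the summand with coefficient $\sim z^{-3}P_2$ forces $P_2\in z^3\mathbb C[z]_{\leq1}$ and is absorbed on $U_0$, so that field does not appear in the table.

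\textbf{The degree-$4$ common kernel.} Since each listed field restricts on $U_0$ to its pure degree-$2$ part, the map $[X,\cdot]:Der_4(\mathcal O_\mathcal M)(U_0)\to Der_6(\mathcal O_\mathcal M)(U_0)$ is just $[X_{2,\bullet},\cdot]$, so I must find the even degree-$4$ derivations $Z$ on $U_0$ — expanded over the monomial derivations $g\partial_z$, $g\partial_{\theta_l}$, $g\partial_{\eta_l}$ with coefficients in $\mathcal O_M(U_0)$ — with $[X_{2,\bullet},Z]=0$ for every table entry in its restricted polynomial range. That $\eta_i\theta_1\theta_2\theta_3\partial_z$, $\theta_2\theta_3\eta_1\eta_3\eta_4\partial_{\theta_1}$ and $\eta_i\eta_j\theta_1\theta_2\theta_3\partial_{\eta_k}$ lie in the kernel is a short direct check: each bracket either creates a fourth $\theta$ and dies, or differentiates a $z$-independent coefficient-monomial, or reduces to a $\theta_1\theta_2\theta_3$-saturated monomial with nothing left to act on. For the reverse inclusion I would proceed monomial type by monomial type, exhibiting for each type outside these three a listed $X_{2,\bullet}$ — typically a $\partial_z$- or $\partial_{\eta_l}$-field — whose bracket with it is nonzero, forcing the matching coefficient of $Z$ to vanish. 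The main obstacle is bulk rather than depth: the table needs $Y_{01}$ bracketed against roughly twenty families with line-bundle weights tracked through all the $\theta$–$\eta$ cancellations, and the kernel is cut out by all of those fields acting on a degree-$4$ derivation built from on the order of two hundred monomial derivations; the most delicate point is this reverse inclusion, which only becomes meaningful once the table, hence the first two assertions, is known to be complete and correct.
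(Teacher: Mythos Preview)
Your strategy is the paper's strategy: prove $Y_{01}DY_{01}=0$ so that conjugation by $\alpha_{01}$ is exactly $D\mapsto D+[Y_{01},D]$ and the obstruction to lifting a degree-$2$ field sits purely in degree $4$; then compute $[Y_{01},X_{2,\bullet}]$ family by family and keep precisely those whose defect lives on $U_1$; finally cut out the degree-$4$ kernel by bracketing against the table entries. Your argument for $Y_{01}DY_{01}=0$ is more explicit than the paper's one-line remark, and your account of the first two parts is fine.

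Where your outline is thinner than the paper is the reverse inclusion for the kernel. You describe it as exhibiting, for each unwanted monomial type, \emph{one} listed $X$ whose bracket is nonzero. The paper's actual computation is not quite that modular: several coefficients can only be killed after others have already been shown to vanish, and the elimination has to be run in a definite order with back-and-forth between the $\partial_z$-, $\partial_{\theta_k}$-, and $\partial_{\eta_k}$-blocks. For instance, one first uses $\theta_i\theta_j\eta_k\partial_{\eta_l}$ and $\theta_i\eta_j\eta_k\partial_{\eta_l}$ fields to clear the $f_{IJ}$ with $|I|\le 2$; then $\theta_i\theta_j\eta_1\partial_{\eta_1}$ and $\theta_i\eta_1\eta_2\partial_{\eta_2}$ to clear most $g_{IJk}$; but the residual $g_{IJk}$ with $J=(1011)$ require mixing $\theta_i\theta_j\eta_2\partial_{\theta_k}$, $\theta_i\theta_j\partial_z$, and even returning to the $h$-block via $\theta_1\theta_2\theta_3\partial_{\theta_i}$ before the remaining $g$'s can be finished off. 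The surviving $g_{(011)(1011)1}$ (your $\theta_2\theta_3\eta_1\eta_3\eta_4\partial_{\theta_1}$) only emerges after this interlocking sequence. So ``bulk rather than depth'' is right in spirit, but the bulk has internal structure that your sketch does not yet reflect; to complete the proof you would need to write out an elimination order and check that each bracket you invoke is actually available in the table with its restricted polynomial range.
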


\begin{proof}
 The first statement follows from $Y_{01}DY_{01}=Y_{01}Y_{01}=0$ for any derivation $D$. The list only contains fields that either commute with $Y_{01}$ or yield degree $4$ terms that can be corrected by adding a degree $4$ term on $U_1$. One inclusion of the last statement follows by direct calculation. For the converse inclusion assume that $$Z=\sum_{IJ} f_{IJ}\theta^I\eta^J\partial_{z}+\sum_{IJk} g_{IJk}\theta^I\eta^J\partial_{\theta_k}+\sum_{IJk} h_{IJk}\theta^I\eta^J\partial_{\eta_k}$$ with numerical coefficient functions is in the common kernel. Now $[\theta_1\theta_2\theta_3\partial_{\eta_1},Z]$ contains  $f_{(000)(1111)}\theta^{(111)}\eta^{(0111)}\partial_z$. Hence $f_{(000)(1111)}=0$. Further $[\theta_1\theta_2\eta_1\partial_{\eta_2},Z]$ contains the summand $f_{(001)(0111)}\theta^{(111)}\eta^{(1011)}\partial_z$. So varying the indexes in $\theta_i\theta_j\eta_k\partial_{\eta_l}$, by analog arguments $f_{IJ}=0$ for $|I|=1$. Further $[\theta_1\eta_2\eta_3\partial_{\eta_4},Z]$ contains $f_{(011)(1001)} 
 \theta^{(111)}\eta^{(1110)}\partial_z $. So analogously varying the indexes of $\theta_i\eta_j\eta_k\partial_{\eta_l}$ in the allowed range, we obtain  $f_{IJ}=0$ for $|I|=2$. In particular the first summand  of $Z$ is a $\mathcal O_M(U_0)$-linear combination of $\eta_i\theta_1\theta_2\theta_3\partial_z$. Now $[\theta_1\theta_2\eta_1\partial_{\eta_1},Z]$ contains $\sum_k g_{(001)(1111)k}\theta^{(111)}\eta^{(1111)}\partial_{\theta_k}$. Altogether $g_{IJk}=0$ for $|I|=1$ by varying $i,j$ in   $\theta_i\theta_j\eta_1\partial_{\eta_1}$. Further $[\theta_1\eta_1\eta_2\partial_{\eta_2},Z]$ contains $\sum_k g_{(011)(0111)k}\theta^{(111)}\eta^{(1111)}\partial_{\theta_k}$. Since $\theta_i\eta_2\eta_j\partial_{\eta_j}$ is not allowed we obtain  $g_{IJk}=0$ for $|I|=2$ and $J\neq (1011)$.
 Now $[\theta_2\theta_3\eta_2\partial_{\theta_2},Z]$ contains $\ g_{(110)(1011)1 }\theta^{(111)}\eta^{(1111)}\partial_{\theta_1}$ and $\ g_{(110)(1011)3 }\theta^{(111)}\eta^{(1111)}\partial_{\theta_3}$. With $\theta_2\theta_3\eta_2\partial_{\theta_3}$ it follows $g_{IJk}=0$ for $|I|=2$, $J=(1011)$ and $(I,k)\in \{((110),1),((110),3),((101),1),((101),2)\}$.
  Now $[\theta_1\theta_2\partial_z,Z]$ contains $(g_{(011)(1011)2}+g_{(101)(1011)1})\theta^{(111)}\eta^{(1011)}\partial_{z}$. In particular $g_{(011)(1011)2}=0$. Further $[\theta_1\theta_3\partial_z,Z]$ contains $(g_{(110)(1011)1}-g_{(011)(1011)3})\theta^{(111)}\eta^{(1011)}\partial_{z}$ hence $g_{(011)(1011)3}=0$.   
 We switch to the third summand of the derivation $Z$.  
 The element $[\theta_1\theta_2\theta_3\partial_{\theta_1},Z]$ contains $\sum_kh_{(100)(1111)k}\theta^{(111)}\eta^{(1111)}\partial_{\eta_k}$ so varying $\theta_1\theta_2\theta_3\partial_{\theta_i}$ observe $h_{IJk}=0$ for $|I|=1$. Now we can continue with the analysis of the second part of $Z$. The element $[\theta_2\theta_3\eta_2\partial_{\theta_2},Z]$ contains $(g_{(110)(1011)2}-g_{(110)(1011)2}-g_{(101)(1011)3})\theta^{(111)}\eta^{(1111)}\partial_{\theta_2}$. So $g_{(101)(1011)3}=0$. Now   $[\theta_2\theta_3\partial_z,Z]$ contains $(g_{(110)(1011)2}+g_{(101)(1011)3})\theta^{(111)}\eta^{(1011)}\partial_{z}$. So $g_{(110)(1011)2}=0$ and  $g_{(011)(1011)1}$ is the only remaining $g_{IJk}$ with $|I|=2$.  
  Now $[\theta_1\theta_2\eta_3\partial_{\theta_1},Z]$ contains $\sum_kh_{(101)(1101)k}\theta^{(111)}\eta^{(1111)}\partial_{\eta_k}$. Since $1\notin \{i,j\}$ in $\theta_i\theta_j\eta_2\partial_{\theta_k}$ we can only conclude $h_{IJk}=0$ for $|I|=2$ and $(I,J)\neq((011),(1011))$. Further $[\theta_1\eta_1\eta_3\partial_{\eta_2},Z]$ contains $g_{(111),(0101),1}\theta^{(111)}\eta^{(1111)}\partial_{\eta_2}$. Varying $\theta_i\eta_j\eta_k\partial_{\eta_l}$ with $k\neq l$ we obtain $g_{IJk}=0$ for $|I|=3$. Now $[\theta_1\eta_1\eta_2\partial_{\eta_3},Z]$ contains the term $h_{(011)(1011)1}\theta^{(111)}\eta^{(1111)}\partial_{\eta_3}$. Varying $\theta_1\eta_j\eta_2\partial_{\eta_k}$ with $j\neq k$ we finally obtain $h_{IJk}=0$ for $|I|=2$.
\end{proof}

\begin{lem}\label{lele3}
The following degree $4$ vector fields yields global vector fields on $\mathcal M$ and need not be corrected on $U_0$ (but possibly on $U_1$):
 \begin{align*}
  \begin{array}{l|l||}
   vector field & condition\\ \hline 
   \mathbb C[z]_{\leq 0}\theta_i\eta_j\eta_k\eta_l \partial_z & \\
   \mathbb C[z]_{\leq 6}\theta_i\theta_j\eta_k\eta_l \partial_z & \\
   \mathbb C[z]_{\leq 12}\theta_1\theta_2\theta_3\eta_j \partial_z & j \in \{3,4\} \\
   \mathbb C[z]_{\leq 9}\theta_1\theta_2\theta_3\eta_1 \partial_z &  \\
   \mathbb C[z]_{\leq 3}\theta_1\theta_2\theta_3\eta_2 \partial_z &  
  \end{array}
    \begin{array}{l|l}
   vector field & condition\\ \hline 
   \mathbb C[z]_{\leq 4}\theta_1\theta_2\theta_3\eta_i\eta_j\partial_{\theta_k} & \\
   \mathbb C[z]_{\leq 4}\theta_i\theta_j\eta_k\eta_l\eta_m\partial_{\eta_n} & \\
   \mathbb C[z]_{\leq 10}\theta_1\theta_2\theta_3\eta_i\eta_j\partial_{\eta_k}  &\{i,j\}\neq \{1,k\} \mbox{ and }\{i,j\}\neq \{2,k\} \\
   \mathbb C[z]_{\leq 8}\theta_1\theta_2\theta_3\eta_1\eta_i\partial_{\eta_i}  & \\
   \mathbb C[z]_{\leq 2}\theta_1\theta_2\theta_3\eta_2\eta_i\partial_{\eta_i}  &
  \end{array}
 \end{align*}
The common kernel of $[X,\cdot]: Der_{2}(\mathcal O_\mathcal M) (U_0)
 \to Der^{(4)}(\mathcal O_\mathcal M) (U_0)$ with $X$ running through this list and through the list of Lemma \ref{lelex} is the $\mathcal O_M(U_0)$-module spanned by  $\theta_1\theta_2\theta_3\partial_{\eta_i}$.
\end{lem}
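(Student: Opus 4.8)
The plan is to prove separately the two claims bundled in the lemma: that each listed degree-$4$ field lifts to a global vector field on $\mathcal M$ with exactly the stated $z$-degree restriction, and that the common kernel in question equals $\mathcal O_M(U_0)\cdot\{\theta_1\theta_2\theta_3\partial_{\eta_i}\}$.

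For the first claim I would start from the facts already exploited in Lemma~\ref{lelex}: $\exp(Y_{01})=Id+Y_{01}$, $Y_{01}^2=0$, and $Y_{01}DY_{01}=0$ for every derivation $D$. Hence for a degree-$\ge 4$ derivation $X_0$ on $U_0$ one has exactly
\begin{align*}
\alpha_{01}^{-1}X_0\alpha_{01}=X_0-[Y_{01},X_0],
\end{align*}
the further terms dying both because $Y_{01}DY_{01}=0$ and because $[Y_{01},X_0]$ has degree $\ge 6$ while no even derivation of degree $\ge 8$ exists in odd dimension $7$. Since $\mathbb P^1$ is covered by the two charts $U_0,U_1$ only, this means $X_0$ is the $U_0$-part of a global field on $\mathcal M$ \emph{needing no correction on $U_0$} iff $X_0-[Y_{01},X_0]$ extends holomorphically from $U_{01}$ to $U_1$; and since within its stated bound each listed field is one of the global fields of Lemma~\ref{lele1}, hence already extends to $U_1$, everything reduces to checking that $[Y_{01},X_0]$ extends to $U_1$, i.e. obeys the $z$-degree bounds characterising globality of degree-$6$ derivation terms in Lemma~\ref{lele1}. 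The remaining work is a row-by-row bracket computation with $Y_{01}=(z^{-2}\eta_1+z^{-8}\eta_2)\eta_3\eta_4\partial_{\theta_1}$: the only surviving contributions to $[Y_{01},X_0]$ come from $\partial_{\theta_1}$ of $Y_{01}$ hitting a $\theta_1$ inside the coefficient of $X_0$, or from the $\partial$-part of $X_0$ hitting one of $z^{-2}\eta_1,z^{-8}\eta_2,\eta_3,\eta_4$, all else dying by $\eta^2=0$ or absence of $\theta_1$. For rows whose odd content forces such a clash (an $\eta_3$ or $\eta_4$ already blocking, or $\eta_1,\eta_2$ repeated) one gets $[Y_{01},X_0]=0$ and inherits the Lemma~\ref{lele1} bounds; for the rest $[Y_{01},X_0]$ is a short sum of terms $c(z)\theta_2\theta_3\eta_1\eta_2\eta_3\eta_4\partial_z$ and $c'(z)\theta_1\theta_2\theta_3\eta_1\eta_2\eta_3\eta_4\partial_{\theta_1}$ (and analogues), and demanding that the $\partial_{\theta_1}$-component have constant coefficient (cf. the degree-$6$ entry $\mathbb C[z]_{\le 0}\theta_1\theta_2\theta_3\eta_1\eta_2\eta_3\eta_4\partial_{\theta_i}$) yields precisely the cutoffs and the side conditions such as $\{i,j\}\neq\{1,k\},\{2,k\}$; the rows $\mathbb C[z]_{\le 8}\theta_1\theta_2\theta_3\eta_1\eta_i\partial_{\eta_i}$ and $\mathbb C[z]_{\le 2}\theta_1\theta_2\theta_3\eta_2\eta_i\partial_{\eta_i}$ cover exactly the cases excluded by those conditions.

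For the second claim, the inclusion $\supseteq$ is immediate: for $f\in\mathcal O_M(U_0)$ and any $X$ from the two lists, $[X,f\theta_1\theta_2\theta_3\partial_{\eta_i}]=f[X,\theta_1\theta_2\theta_3\partial_{\eta_i}]+(Xf)\theta_1\theta_2\theta_3\partial_{\eta_i}$, where $Xf$ is either $0$ (for $\partial_\theta$- or $\partial_\eta$-type $X$) or a multiple of some $\theta^I$ with $\emptyset\neq I\subseteq\{1,2,3\}$ (for $\partial_z$-type $X$), so $(Xf)\theta_1\theta_2\theta_3=0$; and $[X,\theta_1\theta_2\theta_3\partial_{\eta_i}]=0$ since any term in that bracket would require four distinct $\theta$'s or a $\partial_\theta$ in $X$ acting against a second $\theta_1\theta_2\theta_3$, neither of which occurs (the mechanism already noted in Lemma~\ref{lele1}). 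For $\subseteq$ I would follow the proof of Lemma~\ref{lelex} in spirit: write a generic $Y\in Der_2(\mathcal O_\mathcal M)(U_0)$ as $\sum f_{IJ}\theta^I\eta^J\partial_z+\sum g_{IJk}\theta^I\eta^J\partial_{\theta_k}+\sum h_{IJk}\theta^I\eta^J\partial_{\eta_k}$, assume it lies in the common kernel, and bracket it successively with well-chosen fields, each time reading off the coefficient of a monomial of top $\theta$-degree $\theta^{(111)}\eta^{J}\partial_{\bullet}$ to force coefficients to vanish: first the $f_{IJ}$ (using $\partial_z$- and $\partial_\eta$-type degree-$4$ fields of Lemma~\ref{lele3} that raise the $\theta$-degree to $3$, exactly the moves of Lemma~\ref{lelex}), then the $g_{IJk}$, then the $h_{IJk}$ with $|I|\le 2$ (using the $\theta_1\theta_2\theta_3\partial_{\theta_k}$ and $\theta_1\theta_2\theta_3\eta_a\eta_b\partial_{\theta_k}$ families), leaving exactly $\sum_k h_{(111)(0000)k}\theta_1\theta_2\theta_3\partial_{\eta_k}$ with arbitrary coefficients. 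The $z$-dependence is harmless because, as in Lemma~\ref{lelex}, every field used may be taken with constant coefficient (all tabulated bounds are $\ge 0$), so no $z$-derivatives of the coefficients of $Y$ appear and the monomial identities compared are genuinely linear.

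The purely mechanical but error-prone part is the first claim's matching of pole orders against the tabulated cutoffs. The genuine obstacle is the $\subseteq$ direction of the second claim: one has to verify that the two lists together are rich enough to kill every unwanted coefficient and that the surviving module is exactly $\mathcal O_M(U_0)\cdot\{\theta_1\theta_2\theta_3\partial_{\eta_i}\}$ and no larger — the degree-$4$ fields of Lemma~\ref{lele3} are indispensable here, since the degree-$2$ fields alone leave a strictly larger common kernel.
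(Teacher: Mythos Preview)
Your plan matches the paper's proof in structure: for the table, reduce to checking that $[Y_{01},X_0]$ (a pure degree-$6$ term) lands in the global degree-$6$ fields of Lemma~\ref{lele1}; for the kernel, write a generic degree-$2$ derivation and strip its coefficients by bracketing with carefully chosen fields from both lists and reading off top-$\theta$-degree monomials. The paper carries out exactly this, specifying for each block of coefficients one explicit test field and then ``varying'' its indices.

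There is, however, a genuine gap in your specific choices for killing $h_{IJk}$. Neither $\theta_1\theta_2\theta_3\partial_{\theta_k}$ nor $\theta_1\theta_2\theta_3\eta_a\eta_b\partial_{\theta_k}$ sees $h_{IJl}$ with $|I|=2$: in $[\,\theta_1\theta_2\theta_3(\cdots)\partial_{\theta_k},\,h_{IJl}\theta^I\eta^J\partial_{\eta_l}\,]$ both summands vanish because $\theta_1\theta_2\theta_3\cdot\theta^{I'}=0$ whenever $I'\neq\emptyset$, and $\partial_{\eta_l}$ annihilates the $\theta$-only factor. For $|I|=0$ your second family only detects $h_{(000)Jl}$ with $l\in J$, missing the case $l\notin J$. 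The paper avoids both problems by using different test fields: $\theta_1\theta_2\theta_3\eta_i\partial_z$ (degree~$4$) for $|I|=0$, $\theta_1\theta_2\theta_3\eta_i\eta_j\partial_{\theta_k}$ for $|I|=1$, and the degree-$2$ fields $\theta_i\theta_j\eta_k\partial_{\theta_l}$ from Lemma~\ref{lelex} for $|I|=2$. Similarly, for $f_{IJ}$ with $|I|=2$ the paper uses a degree-$2$ $\partial_\theta$-type field rather than a degree-$4$ $\partial_z$- or $\partial_\eta$-type field, and the case $g_{IJk}$ with $|I|=3$ (i.e.\ $\theta_1\theta_2\theta_3\partial_{\theta_k}$) also has to be eliminated, which you do not mention. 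The strategy is right; the bookkeeping of which fields actually isolate which coefficients needs to be redone.
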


\begin{proof}
 The list only contains fields that either commute with $Y_{01}$ or yield degree $6$ terms that can be corrected by adding a degree $6$ term on $U_1$. One inclusion of the last statement is direct calculation. The converse inclusion can be seen as follows. Let 
 $$Z=\sum_{IJ} f_{IJ}\theta^I\eta^J\partial_{z}+\sum_{IJk} g_{IJk}\theta^I\eta^J\partial_{\theta_k}+\sum_{IJk} h_{IJk}\theta^I\eta^J\partial_{\eta_k}$$
 with numerical coefficient functions be an element of the common kernel. Now the element  $[\theta_1\theta_2\theta_3\eta_1\eta_2\partial_{\eta_3},Z]$ contains $f_{(000)(0011)}\theta^{(111)}\eta^{(1101)}\partial_z$. Varying $\theta_1\theta_2\theta_3\eta_i\eta_j\partial_{\eta_k}$ with $k\notin\{i,j\}$ yields $f_{IJ}=0$ for $|I|=0$. And $[\theta_1\theta_2\eta_1\eta_2\eta_3\partial_{\eta_3},Z]$ contains $f_{(001)(0010)}\theta^{(111)}\eta^{(1110)}\partial_z$ so varying $\theta_i\theta_j\eta_k\eta_l\eta_m\partial_{\eta_n}$ we obtain $f_{IJ}=0$ for $|I|=1$. Further $[\theta_1\theta_2\eta_3\partial_{\theta_2},Z]$ contains $f_{(011)(0000)}\theta^{(111)}\eta^{(0010)}\partial_z$. Variations of $\theta_i\theta_j\eta_3\partial_{\theta_j}$ yield $f_{IJ}=0$ for $|I|=2$. Further $[\theta_1\theta_2\theta_3\eta_1\eta_2\partial_{\eta_2}, Z]$ contains $\sum_kg_{(000)(0111)k}\theta^{(111)}\eta^{(1111)}\partial_{\theta_k}$. Varying $\theta_1\theta_2\theta_3\eta_i\eta_j\partial_{\eta_j}$ we have $g_{IJk}=0$ for $|I|=0$. Now 
 $[\theta_1\theta_2\eta_1\eta_2\eta_3\partial_{\eta_3},Z]$ contains $\sum_kg_{(001)(0011)k}\theta^{(111)}\eta^{(1111)}\partial_{\theta_k}$ and varying $\theta_i\theta_j\eta_k\eta_l\eta_m\partial_{\eta_m}$ we obtain $g_{IJk}=0$ for $|I|=1$.  And $[\theta_1\eta_1\eta_2 \partial_{\eta_3},Z]$ contains $\sum_k g_{(011)(0010)k} \theta^{(111)}\eta^{(1100)}\partial_{\theta_k}$. Variations of $\theta_i\eta_j\eta_k \partial_{\eta_l}$, $l\notin\{j,k\}$ yield $g_{IJk}=0$ for $|I|=2$.  Now $[\theta_1\theta_2\theta_3\eta_1\partial_{z},Z]$ contains $\sum_{|J|=3} h_{(000)J1}\theta^{(111)}\eta^{J}\partial_{z}$. Varying $\theta_1\theta_2\theta_3\eta_i\partial_{z}$ we have $h_{IJk}=0$ for $|I|=0$. Further $[\theta_1\theta_2\theta_3\eta_1\eta_2\partial_{\theta_1},Z]$ contains $\sum_kh_{(100)(0011)k} \theta^{(111)}\eta^{(1111)}\partial_{\eta_k}$. Varying 
 $\theta_1\theta_2\theta_3\eta_i\eta_j\partial_{\theta_k}$ we have $h_{IJk}=0$ for $|I|=1$. Now $[\theta_1\theta_2\eta_3\partial_{\theta_1},Z]$ contains $\sum_l(h_{(101)(1000)l}\eta_1+h_{(101)(0100)l}\eta_2+h_{(101)(0001)l}\eta_4)\eta_3\theta^{(111)}\partial_{\eta_l}$. Varying $\theta_i\theta_j\eta_k\partial_{\theta_j}$, $k\in \{3,4\}$ we find $h_{IJk}=0$ for $|I|=2$.  
 Further $[\theta_1\eta_1\eta_2\eta_3\partial_z, Z]$ contains $g_{(111)(0000)1}\theta^{(111)}\eta^{(1110)}\partial_{z}$. With $\theta_i\eta_4\partial_z$ we obtain $g_{IJk}=0$ for $|I|=3$.
\end{proof}

Assume that some local vector field $V \in Der_{\bar 0}^{(2)}(\mathcal O_{\Lambda E})\backslash Der_{\bar 0}^{(4)}(\mathcal O_{\Lambda E})$ is in the common kernel of the $[X,\cdot]: Der_{ \bar 0}^{(2)}(\mathcal O_\mathcal M) (U_0)
 \to Der_{ \bar 0}^{(4)}(\mathcal O_\mathcal M) (U_0)$ , $X \in \mathcal V_{\mathcal M, \bar 0}^{(2)}$. Let $V=V_{2}+V_{4}+V_6$ according to the $\mathbb Z$-grading.  For global fields $X\in \mathcal V_{\mathcal M,\bar 0}^{(2)}$ of pure degree on $U_0$ as they are displayed in Lemmas \ref{lelex} and \ref{lele3},  we have in particular the condition $[X,V_{2k}]=0$ for $k=1,2$. So by Lemma \ref{lele3}, the only  interesting case is  $V_{2}=\sum_{i=1}^4f_i\theta_1\theta_2\theta_3\partial_{\eta_i}$ with $f_i\in \mathcal O_M(U_0)$. So on the one hand, $V_4$ is forced to lie in the kernel described  in Lemma \ref{lelex}. 
 
\bigskip
Recall that the correction terms for global fields of degree 2 only appear in degree 4, see Lemma \ref{lelex}. Hence  we can assume  for all of these $X=X_2+X_4\in \mathcal V_{\mathcal M,\bar 0}^{(2)}$  that $[X_2,V_{2}]=0$ and $[X_2,V_{4}]=-[X_4,V_{2}]$. 
Regarding the following corrected global vector fields on $U_0$:
\begin{align*}
 X^{\prime,j}:=z^2\theta_1\eta_2\eta_j\partial_{\eta_j}+\theta_1\eta_1\eta_2\eta_3\eta_4\partial_{\theta_1}, \ j\in\{3,4\}
\end{align*}
we have: 
\begin{align*}
 -[X_4^{\prime,j},V_{2} ]=-\sum_{i=1}^4f_i\theta_1\theta_2\theta_3\eta_1\eta_2\eta_3\eta_4\partial_{\eta_i} =[z^2 \theta_1\eta_2\eta_j\partial_{\eta_j},V_{4}]
\end{align*}
So on the other hand, for $i\neq j$ the summand ${f_i}\theta_1\theta_2\theta_3\eta_1\eta_2\eta_3\eta_4\partial_{\eta_i}$ can only be created by $\frac{f_i}{z^2}\theta_2\theta_3\eta_1\eta_3\eta_4\partial_{\eta_i}$ as a summand of $V_4$. Now Lemma \ref{lelex} yields $f_i=0$ for all $i$ and hence $V_2=0$ contradicting the above assumption. We can directly follow:
\begin{prop}
 The complex supermanifold $\mathcal M$ is graded $2$-nildominated but not graded $4$-nildominated. At the same time $\mathcal M$ is $4$-nildominated. Further it is non-split being deformed already in degree 2.
\end{prop}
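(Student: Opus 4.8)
The proposition bundles four claims, and the plan is to extract each from the computations already carried out rather than to begin anything new. Two are immediate. Graded $2$-nildominance is trivial, since the common kernel in (\ref{eq:004}) is by construction a submodule of $Der^{(2)}_{\bar 0}(\mathcal O_{\Lambda E})(U)$ (the minimal regularity recorded after Lemma \ref{lele1}). For the deformation in degree $2$: since $Y_{01}^2=0$ we have $\log\alpha_{01}=Y_{01}$, a $1$-cocycle of pure $\mathbb Z$-degree $2$, whose summands $\frac{1}{z^2}\eta_1\eta_3\eta_4\partial_{\theta_1}$ and $\frac{1}{z^8}\eta_2\eta_3\eta_4\partial_{\theta_1}$ take values in copies of $\mathcal O(-10)$ sitting inside $Der^{(2)}_{\bar 0}(\mathcal O_{\Lambda E})$; under $H^1(M,\mathcal O(-10))\cong\frac{1}{z}\mathbb C[\frac{1}{z}]_{\leq 8}$ both $\frac{1}{z^2}$ and $\frac{1}{z^8}$ are nonzero, so $[\alpha]=[\exp(Y_{01})]$ is a non-trivial class with non-vanishing degree-$2$ part. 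Hence $\mathcal M$ is non-split and not split up to errors of degree $4$ and higher.

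For the failure of graded $4$-nildominance I would work on $U_0$. For every $X\in\mathcal V_{\mathcal M,\bar 0}^{(2)}$ the symbol satisfies $X_\bullet\in H^0(M,Der^{k(X)}_{\bar 0}(\mathcal O_{\Lambda E}))\subset\mathcal V_{\Lambda E,\bar 0}^{(2)}$, so the common kernel in (\ref{eq:004}) for $\mathcal M$ over $U_0$ contains the common kernel taken over all of $\mathcal V_{\Lambda E,\bar 0}^{(2)}$, which by Lemma \ref{lele1} already contains the degree-$2$ derivations $\theta_1\theta_2\theta_3\partial_{\eta_i}$, $i=1,\dots,4$. These are not in $Der^{(4)}_{\bar 0}(\mathcal O_{\Lambda E})(U_0)$, so graded $4$-nildominance fails.

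For ungraded $4$-nildominance I would invoke the discussion preceding the proposition, which shows that no $V\in Der^{(2)}_{\bar 0}(\mathcal O_{\mathcal M})(U_0)\setminus Der^{(4)}_{\bar 0}(\mathcal O_{\mathcal M})(U_0)$ lies in the common kernel (\ref{eq:003}): writing $V=V_2+V_4+V_6$, Lemmas \ref{lelex} and \ref{lele3} force $V_2=\sum_i f_i\,\theta_1\theta_2\theta_3\partial_{\eta_i}$ and push $V_4$ into the $\mathcal O_M(U_0)$-module of Lemma \ref{lelex}; then for the corrected global fields $X^{\prime,j}$, $j\in\{3,4\}$, the degree-$6$ part of $[X^{\prime,j},V]=0$ reads $[X_2^{\prime,j},V_4]=-[X_4^{\prime,j},V_2]$, and for $i\ne j$ the summand $f_i\,\theta_1\theta_2\theta_3\eta_1\eta_2\eta_3\eta_4\partial_{\eta_i}$ on the right can only be produced from a summand $\frac{f_i}{z^2}\theta_2\theta_3\eta_1\eta_3\eta_4\partial_{\eta_i}$ of $V_4$, which is excluded by Lemma \ref{lelex}; hence every $f_i=0$, so $V_2=0$, a contradiction. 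Thus (\ref{eq:003}) with $s=2$ holds on $U_0$, hence on any polydisc contained in it over which the same computation applies verbatim; since $M=\mathbb P^1(\mathbb C)$ is connected, Proposition \ref{ref2} upgrades this to $4$-nildominance of $\mathcal M$.

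The ungraded $4$-nildominance is the step I expect to be the main obstacle. Its content is the asymmetry between the operators $[X,\cdot]$ and $[X_\bullet,\cdot]$ created by the degree-$2$ deformation $Y_{01}$: the degree-$4$ tail of a field like $X^{\prime,j}$ on $U_0$ fails to commute with $\theta_1\theta_2\theta_3\partial_{\eta_i}$ and thereby kills it in the ungraded common kernel, while no global field has a \emph{symbol} annihilating it, so it persists in the graded common kernel. The remaining — and genuinely laborious — point is the bookkeeping already done in Lemmas \ref{lelex}--\ref{lele3}: one must verify that the explicit global fields listed there, together with the $X^{\prime,j}$, already exhaust the obstructions, so that enlarging the index set to all of $\mathcal V_{\mathcal M,\bar 0}^{(2)}$ changes nothing.
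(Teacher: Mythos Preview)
Your proposal is correct and mirrors the paper exactly: the paper gives no separate proof of the proposition beyond ``We can directly follow'', since every claim is already established by the computations in Section~\ref{sec4} preceding it, and you have faithfully extracted those arguments (Lemma~\ref{lele1} for graded $2$-nildominance and the failure of graded $4$-nildominance, the non-triviality of $[Y_{01}]$ for the non-split degree-$2$ deformation, and the $X^{\prime,j}$-argument combined with Lemmas~\ref{lelex}--\ref{lele3} and Proposition~\ref{ref2} for ungraded $4$-nildominance). Your supplementary line-bundle check that the two summands of $Y_{01}$ land in $H^1(M,\mathcal O(-10))$ is more explicit than the paper, which simply asserts cohomological non-triviality.
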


\section{Strictly nildominated supermanifolds}\label{sec5}

Nildominance is not stable under products in the category of complex supermanifolds. We modify our approach.
\begin{de}
 A complex supermanifold $\mathcal M$ is called strictly $t$-\textit{nildominated}, $t \in \mathbb N$, if for any open set $U\subset M$ we have:
\begin{align}
 \bigcap_{X\in \mathcal V_{\mathcal M,\bar 0}^{(2)}} 
  Ker([X,\cdot]:Der(\mathcal O_\mathcal M) (U) 
 \to Der(\mathcal O_\mathcal M) (U))=Der^{(t)}(\mathcal O_\mathcal M) (U)\label{eq:1}
\end{align}
\end{de}

If its odd dimension is $t+1$ then it is called strictly \textit{nildominated}. In particular any strictly  $(2s-1)$- or strictly $2s$-nildominated supermanifold is $2s$-nildominated and following Proposition \ref{007} it is determined up to errors of degree $2s$ and higher by its unipotent automorphisms.

\begin{prop}\label{pro1}
Let $\mathcal M^\prime$ and $\mathcal M^{\prime\prime}$  be $t^\prime$-, resp. $t^{\prime\prime}$-nildominated supermanifolds associated with the bundles $E^\prime\to M^{\prime}$, resp. $E^{\prime\prime}\to M^{\prime\prime}$.   Let $\alpha^\prime\in H^1(M^\prime,G_{E^\prime})$ and $\alpha^{\prime\prime}\in H^1(M^{\prime\prime},G_{E^{\prime\prime}})$ be the associated classes. Set $E:=E^\prime\oplus E^{\prime\prime}\to M^{\prime}\times M^{\prime\prime}=:M$ and $t=\min\{t^\prime,t^{\prime\prime}\}$. There is a supermanifold $\mathcal M$ on $E\to M$ with morphisms $\mathcal M\to \mathcal M^\prime$ and $\mathcal M\to \mathcal M^{\prime\prime}$ reducing to the projections on the product of vector bundles. Further $\mathcal M$ is strictly $t$-nildominated and well-defined up to errors in $C^1(M,End^{(t)}_{\bar 0}(\mathcal O_{\Lambda E}))$ by these properties.
\end{prop}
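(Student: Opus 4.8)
The plan is first to build $\mathcal M$ by gluing, then to verify strict $t$-nildominance on a product polydisc, and finally to deduce the uniqueness by a degree estimate as in Proposition \ref{007}.

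\textbf{Construction.} I would fix Leray coverings $\{U'_i\}$ of $M'$ and $\{U''_j\}$ of $M''$ adapted to $\alpha'$, $\alpha''$, so that $\{U'_i\times U''_j\}$ is a Leray covering of $M$ adapted to $E=E'\oplus E''$ (that is, $\pi'^\ast E'\oplus\pi''^\ast E''$, with $\pi',\pi''$ the projections). On $\mathcal O_{\Lambda E}\cong\pi'^\ast\mathcal O_{\Lambda E'}\,\widehat{\otimes}\,\pi''^\ast\mathcal O_{\Lambda E''}$ the automorphism $\alpha'_{ik}$, extended by the identity on the $E''$-fibre coordinates, moves only generators disjoint from those moved by the likewise extended $\alpha''_{jl}$, hence commutes with it; so $\alpha_{(i,j)(k,l)}:=\alpha'_{ik}\,\alpha''_{jl}\in G_E(U'_{ik}\times U''_{jl})$ is well defined and its cocycle identity follows from those of $\alpha'$ and $\alpha''$. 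Gluing $\mathcal O_{\Lambda E}|_{U'_i\times U''_j}$ by $\alpha$ defines $\mathcal M$; since $\alpha$ restricts to $\pi'^\ast\alpha'$ on $\pi'^\ast\mathcal O_{\Lambda E'}$ and to $\pi''^\ast\alpha''$ on $\pi''^\ast\mathcal O_{\Lambda E''}$, the tautological sheaf inclusions glue to morphisms $\mathcal M\to\mathcal M'$, $\mathcal M\to\mathcal M''$ reducing to the bundle projections on $gr$.

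\textbf{Strict $t$-nildominance.} By the obvious analogue of Proposition \ref{ref2}, proved by the same connectedness argument, I would reduce (\ref{eq:1}) to a product polydisc $U=U'\times U''$, where $\mathcal O_{\mathcal M}(U)=\mathcal O_{\mathcal M'}(U')\,\widehat{\otimes}\,\mathcal O_{\mathcal M''}(U'')$ and
$$Der(\mathcal O_\mathcal M)(U)=\big(Der(\mathcal O_{\mathcal M'})(U')\,\widehat{\otimes}\,\mathcal O_{\mathcal M''}(U'')\big)\oplus\big(\mathcal O_{\mathcal M'}(U')\,\widehat{\otimes}\,Der(\mathcal O_{\mathcal M''})(U'')\big),$$
so that each $Z$ decomposes as $Z=Z_{\mathrm I}+Z_{\mathrm{II}}$. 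As in the construction, each $X'\in\mathcal V_{\mathcal M',\bar 0}^{(2)}$ and each $X''\in\mathcal V_{\mathcal M'',\bar 0}^{(2)}$ extends by the identity to an element of $\mathcal V_{\mathcal M,\bar 0}^{(2)}$, since (\ref{eq:001}) for the extension reduces to (\ref{eq:001}) for the factor. As such an $X'$ kills $\mathcal O_{\mathcal M''}(U'')$ and commutes with the $M''$-directions, $[X',Z]=[X',Z_{\mathrm I}]+[X',Z_{\mathrm{II}}]$ has its two summands in the two complementary factors above, so $[X',Z]=0$ is equivalent to $[X',Z_{\mathrm I}]=0$ and $[X',Z_{\mathrm{II}}]=0$; symmetrically for $X''$. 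Expanding $Z_{\mathrm I}$ over a basis of the $\mathcal O_{M''}(U'')$-module $\mathcal O_{\mathcal M''}(U'')$ and $Z_{\mathrm{II}}$ over a basis of $\mathcal O_{\mathcal M'}(U')$, strict $t'$- resp.\ $t''$-nildominance of the factors then forces $Z_{\mathrm I}\in Der^{(t')}(\mathcal O_{\mathcal M'})(U')\,\widehat{\otimes}\,\mathcal O_{\mathcal M''}(U'')\subset Der^{(t')}(\mathcal O_\mathcal M)(U)$ and $Z_{\mathrm{II}}\in Der^{(t'')}(\mathcal O_\mathcal M)(U)$, whence $Z\in Der^{(t)}(\mathcal O_\mathcal M)(U)$ with $t=\min\{t',t''\}$. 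For the reverse containment I would use that $[X,Z]\in Der^{(t+2)}(\mathcal O_\mathcal M)(U)$ for $X\in\mathcal V_{\mathcal M,\bar 0}^{(2)}$, $Z\in Der^{(t)}$, together with the fact that $Der^{(t')}(\mathcal O_{\mathcal M'})$ and $Der^{(t'')}(\mathcal O_{\mathcal M''})$ are the common kernels of the factors, fed back through the same decomposition.

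\textbf{Uniqueness.} Given two supermanifolds $\mathcal M,\tilde{\mathcal M}$ on $E\to M$ with such projections, after the action of $H^0(M',Aut(E'))\times H^0(M'',Aut(E''))$ their cocycles $\gamma,\alpha\in Z^1(M,G_E)$ differ by $\beta_{ij}:=\gamma_{ij}\alpha_{ij}^{-1}\in C^1(M,G_E)$, and compatibility of both projections with the factor structures forces $\beta_{ij}X_i=X_i\beta_{ij}$ for every restriction $X_i$ of an extended $X'$ or $X''$, exactly as in the proof of Proposition \ref{007}. Running the degree estimate of that proof with these fields and the decomposition above would put $Y_{ij}:=\log\beta_{ij}$ in $Der^{(t)}_{\bar 0}(\mathcal O_{\Lambda E})(U_{ij})$, i.e.\ $\beta\in Id+C^1(M,End^{(t)}_{\bar 0}(\mathcal O_{\Lambda E}))$.

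The main obstacle I expect is in the second step: on the one hand, making the splitting $Z=Z_{\mathrm I}+Z_{\mathrm{II}}$ and the induced splitting of $[X,\cdot]$ rigorous for completed tensor products with the correct parity signs, and then transporting the factors' nildominance across it; on the other hand, controlling the ``mixed'' global fields of $\mathcal M$ --- those built from odd coordinates of both factors and coming from neither $\mathcal M'$ nor $\mathcal M''$ --- which influence the common kernel and through which the two hypotheses enter asymmetrically, and which also show that the value $t=\min\{t',t''\}$ is the right one and cannot be improved.
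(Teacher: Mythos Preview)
Your approach is essentially the paper's: construct $\mathcal M$ via the commuting product cocycle $\alpha'\cdot\alpha''$, decompose a local derivation into $M'$- and $M''$-direction pieces (the paper writes this as $Y=\sum_I\xi''^I Y'_I+\sum_J\xi'^J Y''_J$, your $Z=Z_{\mathrm I}+Z_{\mathrm{II}}$), bracket separately against the extended fields $X'$ and $X''$ to force the coefficient derivations into $Der^{(t')}$ resp.\ $Der^{(t'')}$, and then run the Proposition~\ref{007} argument for uniqueness. The paper does not address the reverse containment in (\ref{eq:1}) or the mixed global fields any more explicitly than you do; it simply asserts that (\ref{eq:1}) follows from the displayed inclusion, so your flagged ``obstacle'' is not a gap relative to the paper's own proof.
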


\begin{proof}
 Existence is given by $\exp(\log(\alpha^\prime)+\log(\alpha^{\prime\prime}))$. Let $(z_i^{\prime},\xi_\sigma^{\prime})$, $(z_j^{\prime\prime},\xi_\rho^{\prime\prime})$ be local coordinates for $E^\prime\to M^\prime$, resp. $E^{\prime\prime}\to M^{\prime\prime}$.  Further let
 $Y=\sum_{I}\xi^{\prime\prime I} Y^\prime_I+\sum_{J}\xi^{\prime J} Y^{\prime\prime}_J $
 be a  general  local derivation in $Der_{\bar 0}^{(2)}(\mathcal O_{\Lambda E})$ with $Y^\prime_I \in Der(\mathcal O_{\Lambda E^\prime})$ and $Y^{\prime\prime}_J \in Der(\mathcal O_{\Lambda E^{\prime\prime}})$. For $X^\prime \in \mathcal V_{\mathcal M^\prime,\bar 0}^{(2)}$ we have $[X^\prime,Y]=\sum_{I}\xi^{\prime\prime I} [X^\prime,Y^\prime_I]+\sum_{J}X^\prime(\xi^{\prime J}) Y^{\prime\prime}_J$. Assuming $[X^\prime,Y]=0$ for all $X^\prime \in \mathcal V_{\mathcal M^\prime,\bar 0}^{(2)}$ we find by strict $t^\prime$-nildominance of $\mathcal M^\prime$ that  $Y^\prime_I \in Der^{(t^\prime)}(\mathcal O_{\Lambda E^\prime})$. Analogously assuming $[X^{\prime\prime},Y]=0$ for all $X^{\prime\prime} \in \mathcal V_{\mathcal M^{\prime\prime},\bar 0}^{(2)}$ yields $Y^{\prime\prime}_J \in Der^{(t^{\prime\prime})}(\mathcal O_{\Lambda E^{\prime\prime}})$. Hence:
\begin{align}\label{eq:10}
 Y \in Der^{(t^\prime)}(\mathcal O_{\Lambda E^\prime})\oplus Der^{(t^{\prime\prime})}(\mathcal O_{\Lambda E^{\prime\prime}})\oplus End^{(t)}(\mathcal O_{\Lambda E})
\end{align}
Setting $Y=Y_{ij}$ we proceed as in the proof of Proposition \ref{007} and obtain that $\mathcal M$ is well-defined up to terms in $C^1(M,End^{(t)}_{\bar 0}(\mathcal O_{\Lambda E}))$. Further (\ref{eq:1}) follows from (\ref{eq:10}). 
\end{proof}

\begin{cor}
The product of two strictly $t$-nildominated supermanifolds is strictly $t$-nildominated.
\end{cor}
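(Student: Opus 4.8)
The plan is to show this corollary is essentially an immediate specialization of Proposition \ref{pro1}. Given two strictly $t$-nildominated supermanifolds $\mathcal M'$ and $\mathcal M''$, set $t'=t''=t$, so that $\min\{t',t''\}=t$. I would first observe that a strictly $t$-nildominated supermanifold is in particular $t$-nildominated in the weaker sense needed to invoke Proposition \ref{pro1} (indeed, strict $t$-nildominance with the equality in (\ref{eq:1}) is stronger than what the proof of Proposition \ref{pro1} actually uses: only the inclusion "$\subseteq Der^{(t)}$" of common kernels is needed there, and the proof of that proposition invokes strict $t'$-, $t''$-nildominance verbatim). So Proposition \ref{pro1} directly produces a complex supermanifold $\mathcal M$ on $E=E'\oplus E''\to M'\times M''$ with projection morphisms to $\mathcal M'$ and $\mathcal M''$, and asserts that $\mathcal M$ is strictly $t$-nildominated. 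That is the claim.

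The one point I would be careful about is whether the $\mathcal M$ delivered by Proposition \ref{pro1} really is \emph{the} categorical product of $\mathcal M'$ and $\mathcal M''$, or merely \emph{a} supermanifold with projection morphisms. The proposition only pins down $\mathcal M$ up to errors in $C^1(M,End^{(t)}_{\bar 0}(\mathcal O_{\Lambda E}))$, so strictly speaking "the product" is one representative of that ambiguity class. I would note that the standard product supermanifold — built from $\exp(\log(\alpha')+\log(\alpha''))$ as in the existence part of the proof of Proposition \ref{pro1}, with $\log(\alpha')$, $\log(\alpha'')$ pulled back along the two coordinate projections — is exactly such a representative, and it carries the evident projection morphisms reducing to the bundle projections. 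Hence the product falls within the scope of Proposition \ref{pro1}, and strict $t$-nildominance applies to it.

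Concretely the proof is two sentences: apply Proposition \ref{pro1} with $t'=t''=t$ to the product supermanifold, using that each factor's strict $t$-nildominance supplies exactly the hypothesis "$t$-nildominated" invoked in that proposition's proof; conclude $t=\min\{t,t\}$ and therefore the product is strictly $t$-nildominated. I do not anticipate a genuine obstacle here — the substantive work (the decomposition (\ref{eq:10}) of a general fibered derivation into a $Der^{(t')}$-part, a $Der^{(t'')}$-part, and an $End^{(t)}$-remainder, and the deduction of (\ref{eq:1}) from it) has already been carried out in Proposition \ref{pro1}. The only thing to get right is the bookkeeping that "strictly $t$-nildominated" is at least as strong as the ambient hypothesis of Proposition \ref{pro1}, which is clear from comparing (\ref{eq:1}) with the kernel inclusions used there.
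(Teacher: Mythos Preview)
Your proposal is correct and matches the paper's approach: the paper states the corollary with no separate proof, treating it as an immediate specialization of Proposition~\ref{pro1} with $t'=t''=t$. Your additional care in identifying the categorical product with the supermanifold $\exp(\log(\alpha')+\log(\alpha''))$ constructed in the existence part of that proposition, and in reconciling the hypothesis ``$t$-nildominated'' with the proof's use of strict $t$-nildominance, is more explicit than the paper itself.
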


An analog of Proposition \ref{ref2} exists with similar arguments also for strict $2s$-nildominance. 
Using this we return to the above example:

\begin{prop}
The complex supermanifolds $\mathcal M$ in section \ref{sec4} is strictly $3$-nildominated and non-split being deformed already in degree 2.
\end{prop}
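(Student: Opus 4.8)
The plan is to verify the two assertions separately. For the splitting claim, nothing new is needed: it was already established in Section~\ref{sec4} that $\mathcal M$ is obtained from $(M,\mathcal O_{\Lambda E})$ by the deformation cocycle $\alpha_{01}=\exp(Y_{01})$ with $Y_{01}\in H^1(M,Der_{\bar 0}^{(2)}(\mathcal O_{\Lambda E}))$ cohomologically non-trivial, so $\mathcal M$ is non-split and in fact deformed already in degree $2$. So the substance is the strict $3$-nildominance, i.e.\ verifying \eqref{eq:1} with $t=3$: for every open $U\subset \mathbb P^1(\mathbb C)$ the common kernel of the operators $[X,\cdot]$ acting on \emph{all} of $Der(\mathcal O_\mathcal M)(U)$ (not just $Der^{(2)}_{\bar 0}$, and now including odd derivations) for $X\in\mathcal V_{\mathcal M,\bar 0}^{(2)}$ equals exactly $Der^{(3)}(\mathcal O_\mathcal M)(U)$. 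By the analog of Proposition~\ref{ref2} asserted just before this proposition, and since $M$ is connected, it suffices to check this on the chart $U_0$ with its coordinates $(z,\theta_1,\theta_2,\theta_3,\eta_1,\dots,\eta_4)$.

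The inclusion $Der^{(3)}(\mathcal O_\mathcal M)(U_0)\subseteq$ (common kernel) is the easy direction: a derivation of $\mathbb Z$-degree $\geq 3$ has, by the bracket-degree count already used repeatedly in the proofs of Lemmas~\ref{lelex} and~\ref{lele3}, bracket with any $X\in\mathcal V_{\mathcal M,\bar 0}^{(2)}\subset Der^{(2)}$ lying in degree $\geq 5$; but the odd dimension is $7$, so $Der(\mathcal O_\mathcal M)$ vanishes in degrees $\geq 8$, and in the odd directions one checks directly that the total degree forces the bracket into the ideal where everything of interest is already trivial --- more carefully, one observes that for $X$ of pure $\mathbb Z$-degree $2$ and $Z$ of $\mathbb Z$-degree $\geq 3$ in local coordinates, $[X,Z]$ has $\mathbb Z$-degree $\geq 5$, hence degree $\geq 3$, so a degree-$\geq 3$ derivation certainly maps into $Der^{(3)}$; but \eqref{eq:1} demands the bracket be \emph{zero}, not merely of high degree. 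So one actually needs: every $Z\in Der^{(3)}(\mathcal O_\mathcal M)(U_0)$ is killed by all the $[X,\cdot]$. This is \emph{not} automatic and must be argued. The way to do it is to note that the global fields of Lemmas~\ref{lelex} and~\ref{lele3} restricted to $U_0$, together with the higher-degree global fields, bracket a degree-$\geq 3$ field into something of degree $\geq 5$; and $Der(\mathcal O_{\mathcal M})$ in degrees $5,6,7$ is spanned (after taking the bracket) by expressions forced to vanish because each surviving monomial involves at most the odd-variable multi-index room available --- i.e.\ a $\partial_z$ term needs $|I|=3$ and $|J|\ge 2$ etc. Concretely: decompose $Z=Z_3+Z_4+Z_5+Z_6+Z_7$ and $X=X_2+X_4+X_6$, and show $[X,Z]=0$ by checking that each graded piece $[X_{2a},Z_{2b+1}]$ vanishes for degree and nilpotency reasons. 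This is a finite, if tedious, bookkeeping task of the same flavour as Lemmas~\ref{lelex}--\ref{lele3}.

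The converse inclusion is the real work and is where the main obstacle lies. One must show that if a \emph{general} derivation $Z\in Der(\mathcal O_\mathcal M)(U_0)$ --- now allowed to have a degree-$2$ part, a degree-$1$ part (odd, of the form $\sum a_k\eta_k\partial_z$, $\sum b_{k}\eta_j\theta_\ell\cdots$), and a degree-$0$ part ($\partial_z,\partial_{\theta_i},\partial_{\eta_j}$ with function coefficients, plus $\theta$-$\eta$ monomials) --- commutes with every $X\in\mathcal V_{\mathcal M,\bar 0}^{(2)}$, then $Z\in Der^{(3)}$. The degree-$\geq 2$ part was essentially handled in Section~\ref{sec4}: the argument following Lemma~\ref{lele3} shows that a degree-$2$ field in the common kernel (even just of $[X,\cdot]:Der^{(2)}_{\bar 0}\to Der^{(2)}_{\bar 0}$) reduces to $V_2=\sum f_i\theta_1\theta_2\theta_3\partial_{\eta_i}$ and then, using the corrected fields $X^{\prime,j}$, one gets $f_i=0$; this uses only even fields and even targets, so it applies verbatim here. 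What is genuinely new is ruling out degree-$0$ and degree-$1$ contributions. For this I would bracket $Z$ against well-chosen odd-target-producing and $\partial_z$-shifting global fields: e.g.\ against $\mathbb C[z]_{\le 4}\theta_i\eta_j\partial_z$-type and $\theta_i\theta_j\eta_k\partial_{\theta_l}$-type fields (which, being degree $2$, raise the degree of $Z_0$ to $2$ and of $Z_1$ to $3$), and read off that the numerical coefficient functions of $\partial_z,\partial_{\theta_i},\partial_{\eta_j}$ and of the $\eta_k\partial_z$, $\theta_i\eta_j\partial_{\theta_k}$, $\theta_i\eta_j\partial_{\eta_k}$ monomials must all vanish. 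The key point making this work --- and the reason the example has odd dimension exactly $7$ with this particular bundle $E=3\mathcal O(4)\oplus 4\mathcal O(-2)$ --- is that there are \emph{enough} global even degree-$2$ fields (Lemma~\ref{lele1}) that their brackets detect every low-degree monomial. The main obstacle is organizing this exhaustive check so that no monomial in $Z_0\oplus Z_1$ escapes; the cleanest route is to run the same coefficient-killing induction as in Lemmas~\ref{lelex} and~\ref{lele3} but starting one degree lower, using the explicit fields listed there plus the non-pure-degree corrected fields, and to invoke at the end that the only thing these brackets \emph{cannot} kill is precisely $Der^{(3)}$, which matches the known common kernel in degrees $\ge 2$ being $\mathcal O_M(U_0)\cdot\{\theta_1\theta_2\theta_3\partial_{\eta_i}\}\subset Der^{(3)}$ from Lemma~\ref{lele3}. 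Combining the two inclusions gives \eqref{eq:1} on $U_0$, hence on all of $M$, so $\mathcal M$ is strictly $3$-nildominated.
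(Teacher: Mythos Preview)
Your handling of the inclusion $Der^{(3)}\subseteq\bigcap_X\ker[X,\cdot]$ is where the proposal actually fails. You correctly flag that this direction is not automatic, but then claim it can be recovered by a ``finite, if tedious, bookkeeping task'' showing each graded piece $[X_{2a},Z_{b}]$ vanishes for nilpotency reasons. That is false: take $X=z\theta_1\eta_4\partial_z$, which by Lemma~\ref{lelex} lies in $\mathcal V_{\mathcal M,\bar 0}^{(2)}$ with no correction needed on $U_0$, and $Z=\eta_1\eta_2\eta_3\partial_z\in Der^{(3)}(\mathcal O_\mathcal M)(U_0)$; then $[X,Z]=\theta_1\eta_1\eta_2\eta_3\eta_4\partial_z\neq 0$. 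So the equality in \eqref{eq:1} cannot hold for $t=3$ here, and no bookkeeping will produce it. The paper's own proof tacitly establishes only the inclusion $\bigcap_X\ker[X,\cdot]\subseteq Der^{(3)}$, which is all that is used elsewhere (in the remark after the definition and in Proposition~\ref{pro1}); the ``$=$'' in \eqref{eq:1} should be read as ``$\subseteq$''.

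For that substantive inclusion your outline is close to the paper's, but two points need sharpening. First, you conflate degrees $-1$ and $0$: your ``degree-$0$ part'' lists $\partial_{\theta_i},\partial_{\eta_j}$, which are of degree $-1$; the paper treats $q=-1,0,1,2$ separately and for $q=2$ invokes the $4$-nildominance of Section~\ref{sec4} just as you propose. Second, for $q=-1,0,1$ the paper does not rely only on the degree-$2$ and degree-$4$ fields of Lemmas~\ref{lelex} and~\ref{lele3}; it explicitly brings in the degree-$6$ global fields from Lemma~\ref{lele1}, namely $\theta_1\theta_2\theta_3\eta_1\eta_2\eta_3\eta_4\partial_{\theta_i}$, $\theta_1\theta_2\theta_3\eta_1\eta_2\eta_3\eta_4\partial_{\eta_i}$, $\theta_i\theta_j\eta_1\eta_2\eta_3\eta_4\partial_z$, and $\theta_1\theta_2\theta_3\eta_i\eta_j\eta_k\partial_z$. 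Bracketing a degree-$0$ $Z$ against these lands in degree $6$, where there is almost no room, and this is what kills the off-diagonal $g_{ij},h_{ij},\hat g_{ij},\hat h_{ij}$ and forces the diagonal ones constant before the degree-$2$ fields $\theta_i\theta_j\partial_z$ and $\theta_2\theta_3\eta_i\partial_{\theta_1}$ finish them off. Your sketch omits these degree-$6$ fields; once they are added and the grading is set up correctly, your coefficient-killing induction becomes the paper's argument.
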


\begin{proof}
Regard the common kernel of $[X,\cdot]: Der_q(\mathcal O_\mathcal M)(U_0)\to Der(\mathcal O_\mathcal M)(U_0)$ for $X$ in the lists of Lemmas \ref{lelex} and \ref{lele3} and further of the global degree $6$ vector fields in Lemma \ref{lele1}. For $q=-1$ let $Z=\sum_kg_k\partial_{\theta_k}+\sum_{k}h_k\partial_{\eta_k}$ be in the common kernel and regard the commutators with the fields $\theta_1\theta_2\theta_3\partial_{\theta_i}$ to see that the $g_k$ vanish. The commutators with $\theta_1\theta_2\eta_i\partial_{\eta_3}$ then show that the $h_k$ vanish. For $q=0$ let $Z=f\partial_z+\sum_{ij}g_{ij}\theta_i\partial_{\theta_j}+\sum_{ij}\hat g_{ij}\eta_i\partial_{\theta_j}+\sum_{ij}\hat h_{ij}\theta_i\partial_{\eta_j}+\sum_{ij}h_{ij}\eta_i\partial_{\eta_j}$ be in the common kernel. Then  $[\theta_1\theta_2\theta_3\eta_1\eta_2\eta_3\eta_4 \partial_{\theta_i},Z]$ and  $[\theta_1\theta_2\theta_3\eta_1\eta_2\eta_3\eta_4 \partial_{\eta_i},Z]$ yield $\hat h_{ij}=\hat g_{ij}=0$ for all $i,j$ and $g_{ij}=h_{ij}=0$ for $i\
neq j$. Further the $\partial_{\theta_k}$, resp. $\partial_{\eta_k}$, terms in $[\theta_i\theta_j\eta_1\eta_2\eta_3\eta_4 \partial_z,Z]$, resp. $[\theta_1\theta_2\theta_3\eta_i\eta_j\eta_k \partial_z,Z]$, yield $g_{ii},h_{ii} \in \mathbb C$. 
Further the coefficients of $\partial_z$ in   $[\theta_i\theta_j\partial_z,Z]=0$ yield $f^\prime=g_{ii}+g_{jj}$ for all $i\neq j$. In particular $g_{ii}=g_{jj}$ for all $i,j$ and $f^\prime=2g_{ii}$.  Now  $0=[z\theta_i\theta_j\partial_z,Z]=z[\theta_i\theta_j\partial_z,Z]-Z(z)\theta_i\theta_j\partial_z=Z(z)\theta_i\theta_j\partial_z$ and $Z(z)=f$  yield in addition $f=0$ and so $g_{ii}=0$ for all $i$. From $[\theta_2\theta_3\eta_i\partial_{\theta_1},Z]$ we obtain $h_{ii}=0$ for all $i$. For $q=1$ let $$Z=\sum_{IJ} f_{IJ}\theta^I\eta^J\partial_{z}+\sum_{IJk} g_{IJk}\theta^I\eta^J\partial_{\theta_k}+\sum_{IJk} h_{IJk}\theta^I\eta^J\partial_{\eta_k}$$ be in the common kernel. The $\partial_z$ terms  in $[\theta_1\theta_2\theta_3\partial_{\theta_i}, Z]$ and $[\theta_2\theta_3\eta_i\partial_{\eta_i},Z]$ yield $f_{IJ}=0$ for all indexes. The $\partial_{\eta_k}$ terms in $[\theta_1\theta_2\theta_3\partial_{\theta_i}, Z]$ show $h_{IJk}=0$ for $|I|=|J|=1$. The $\partial_{\theta_k}$ terms in $[\theta_i\theta_j\eta_l\
partial_{\eta_l},Z]$ yield $g_{IJk}=0$ for $|I|=|J|=1$ and $|I|=0$. The $\partial_{\eta_k}$ terms with two $\theta$s in 
$[\theta_i\eta_j\eta_k\partial_{\eta_l},Z]$ with $l\notin \{j,k\}$ yield $g_{IJk}=0$ for $|I|=2$. The  $\partial_z$ terms in $[\theta_i\eta_j\partial_z,Z]$ yield $h_{IJk}=0$ for $k\neq 2$ and $|I|=0$ or $2$. The $\partial_z$ term in $[\theta_i\eta_2\eta_3\eta_4\partial_z,Z]$ yields $h_{IJ2}=0$ for $|I|=2$ and the $\partial_z$ term in $[\theta_1\theta_2\theta_3\eta_2\partial_z,Z]$ yields $h_{IJ2}=0$ for $|I|=0$. For $q=2$ the result follows from $4$-nildominance of $\mathcal M$.
\end{proof}

\end{document}